\newtheorem{theorem}{Theorem}[section]
\newtheorem{lemma}[theorem]{Lemma}
\newtheorem{conjecture}[theorem]{Conjecture}
\newtheorem{corollary}[theorem]{Corollary}
\newtheorem{proposition}[theorem]{Proposition}
\theoremstyle{definition}
\newtheorem{definition}[theorem]{Definition}
\theoremstyle{remark}
\newtheorem{remark}[theorem]{Remark}
\numberwithin{equation}{section}
\def\PP{{\textbf P}}
\def\OO{\mathcal{O}}
\def\F{\mathcal{F}}
\def\K{\mathcal{K}}
\def\cM{\mathcal{M}}
\def\cR{\mathcal{R}}
\def\rr{\overline{\mathcal{R}}}
\def\cU{\mathcal{U}}
\def\mm{\overline{\mathcal{M}}}
\title{Green's conjecture for general covers}
\author[M. Aprodu]{Marian Aprodu}
\address{Institute of Mathematics "Simion Stoilow" of the Romanian
Academy, RO-014700 Bucharest -- Romania,
and \c Scoala Normal\u a Superioar\u a Bucure\c sti,
Calea Grivi\c tei 21, Sector 1, RO-010702 Bucharest, Romania}
 \email{{\tt marian.aprodu@imar.ro}}
\thanks{M. Aprodu was supported in part by the SFB 647 Raum-Zeit-Materie, by
a PN-II-ID-PCE-2008-2 grant, PCE-2228 (contract no. 502/2009) and
by LEA MathMode. He warmly thanks HU Berlin, IEC Nancy and Institut Poincar\'e Paris
for hospitality.}
\author[G. Farkas]{Gavril Farkas}
\address{Humboldt-Universit\"at zu Berlin, Institut f\"ur Mathematik, Unter den Linden 6,
10099 Berlin, Germany}
\email{{\tt farkas@math.hu-berlin.de}}
\thanks{G. Farkas was supported by the SFB 647 Raum-Zeit-Materie
and by the Schwerpunktprogramm of the DFG Algorithmische Methoden in Algebra und Zahlentheorie.}
\subjclass[2000]{13D02, 14C20.}
\keywords{syzygy, canonical curve, curve cover, Brill-Noether theory.}
\begin{document}

\maketitle

\section{Introduction}

M. Green's Conjecture on  syzygies of canonical curves
$\phi_{K_C}:C\rightarrow \PP^{g-1}$, asserting the following
vanishing of Koszul cohomology groups \cite{Green-JDG84}
$$K_{p, 2}(C, K_C)=0\Leftrightarrow p<\mathrm{Cliff}(C),$$
has been one of the most investigated problems in the last decades
in the theory of algebraic curves. Based on the principle that all
non-trivial syzygies are generated by secants to the canonical curve
$C\subset \PP^{g-1}$, the conjecture is appealing because it predicts
that one can read off the Clifford index of the curve (measuring the
complexity of $C$ in its moduli space) from the graded Betti diagram
of the canonical embedding. Voisin \cite{Voisin-even}, \cite{Voisin-odd}
established Green's conjecture for general curves $[C]\in \cM_g$ of any genus.
\vskip 4pt
Building on the work of Voisin, the first author \cite{A-MRL} has found a Brill-Noether theoretic sufficient condition for a curve to satisfy Green's Conjecture. If  $[C]\in \cM_g$ is a $d$-gonal curve with $2\leq d\leq \frac{g}{2}+1$ satisfying the \emph{linear growth condition}
\begin{equation}\label{lgc}
\mathrm{dim }\  W^1_{g-d+2}(C)=\rho(g,1,g-d+2) = g-2d+2,
\end{equation}
then $C$ satisfies both Green's Conjecture and the Gonality Conjecture \cite{GL-normality}.
\vskip 3pt

Condition (\ref{lgc}) is equivalent to $\mathrm{dim }\  W^1_{d+n}(C)\leq n$
for all $0\le n\le g-2d+2$. In particular, it implies that $C$ has a finite number of pencils of minimal degree. The case of odd genus
and maximal gonality treated by \cite{Voisin-odd} is automatically excluded from condition (\ref{lgc}). One aim of this paper is to establish
Green's conjecture for classes of curves where condition (\ref{lgc}) manifestly fails, in particular for curves having an infinite number of minimal pencils. Typical examples are curves whose Clifford indices are not computed
by pencils, and
their covers. Precisely, if $X$ is a curve of Clifford dimension $r(X):=r\geq 2$, then $\mbox{gon}(X)=\mbox{Cliff}(X)+3$ and $X$ carries an infinite number of pencils of minimal degree \cite{CM}.
If $f:C\rightarrow X$ is a branched covering of $X$ of sufficiently high genus, then  $\mbox{gon}(C)=\mbox{deg}(f)\cdot \mbox{gon}(X)$ and $C$
carries infinitely many pencils of minimal degree, all pulled-back from $X$. In particular, condition (\ref{lgc}) fails for $C$.

\begin{theorem}\label{doubleplane}\hfill

(i) Set $d\geq 3,\ g\geq d^2+1$ and let $C\rightarrow \Gamma\subset \PP^2$ be a general genus $g$ double covering of a smooth plane curve of
degree $d$. Then $K_{2d-5, 2}(C, K_C)=0$ and $C$ satisfies Green's Conjecture.

(ii) Let $g\geq 2d^2+1$ and $C\rightarrow \Gamma\subset \PP^2$ be a general genus $g$ fourfold cover of a smooth plane curve of degree $d$. Then $C$
satisfies Green's Conjecture.
\end{theorem}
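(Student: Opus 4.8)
The plan is to reduce the statement to a single Koszul vanishing at the critical index and then to establish that vanishing by replacing the failed linear growth condition (\ref{lgc}) with information coming from the covering structure.

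First I would pin down the Brill--Noether invariants of the general cover. For a general double cover $f\colon C\to\Gamma$ with $g\ge d^2+1$, every pencil of minimal degree is pulled back from $\Gamma$, so $\gon(C)=2\,\gon(\Gamma)=2d-2$, while such a pulled-back pencil has Clifford index $(2d-2)-2=2d-4$ and $f^*\OO_\Gamma(1)$ is a $g^2_{2d}$ of the same Clifford index. The content of this step is to show that no line bundle on $C$ has smaller Clifford index, i.e. $\Cliff(C)=2d-4$; this is a Brill--Noether statement for the general cover and is exactly where $g\ge d^2+1$ enters, preventing a pulled-back series, or a twist of one, from becoming unexpectedly special. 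Granting this, Green's Conjecture for $C$ amounts to the single critical vanishing $K_{2d-5,2}(C,K_C)=0$, since $K_{\Cliff(C),2}(C,K_C)\ne0$ always and the linear strand of the resolution is connected.

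Next I would transpose this to the other end of the Betti table. By Green's duality $K_{p,2}(C,K_C)^\vee\cong K_{g-2-p,1}(C,K_C)$, so
\[
K_{2d-5,2}(C,K_C)=0\iff K_{g-2d+3,1}(C,K_C)=0 .
\]
The right-hand group is precisely the first group whose vanishing is predicted by the Gonality Conjecture for a curve of gonality $2d-2$: the Green--Lazarsfeld nonvanishing produces a nonzero class in $K_{g-2d+2,1}(C,K_C)$ from the minimal pencil, and one index higher is the critical threshold. Thus part (i) is equivalent to the Gonality Conjecture for $C$, and the whole difficulty is concentrated in this one vanishing.

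The main obstacle is that Aprodu's theorem, which would deliver this at once, does not apply: the minimal pencils form a one-parameter family (the projection pencils of $\Gamma\subset\PP^2$ pull back to $C$), so $\dim W^1_{2d-2}(C)\ge1$ and condition (\ref{lgc}) fails. The substitute I would exploit is that \emph{all} minimal pencils are pulled back from $\Gamma$, so the rational normal scrolls they sweep out on the canonical curve vary in a single family governed by the geometry of $\Gamma$; it is this uniformity, rather than the finiteness furnished by (\ref{lgc}), that I would feed into a Hirschowitz--Ramanan--Voisin type analysis of $K_{g-2d+3,1}(C,K_C)$. Concretely, by semicontinuity and the irreducibility of the Hurwitz space of such covers it suffices to verify the vanishing on one well-chosen member; I would take $C_0$ lying on a surface adapted to the covering (a double plane $\pi\colon S\to\PP^2$ with $C_0=\pi^{-1}(\Gamma)$ when the genus permits, or a suitable degeneration otherwise) and compute the critical Koszul group through adjunction $K_{C_0}=(K_S+C_0)|_{C_0}$, descending to $\PP^2$ via $\pi_*\OO_S=\OO_{\PP^2}\oplus\OO_{\PP^2}(-e)$ and applying Kodaira--Kawamata--Viehweg vanishing. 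Carrying out this surface computation, and reconciling it with the genera not realized by smooth double planes, is the technical heart of the argument. For part (ii) I would run the same reduction: a general fourfold cover has $\gon(C)=4d-4$ and $\Cliff(C)=4d-6$, so Green's Conjecture is again equivalent by duality to the single vanishing $K_{g-4d+5,1}(C,K_C)=0$, which I would attack by factoring the degree-four map as a tower of double covers and applying the method of part (i), the doubled bound $g\ge 2d^2+1$ reflecting the larger gonality and the correspondingly deeper position of the critical Koszul group.
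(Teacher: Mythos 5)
Your reduction is sound as far as it goes: duality converts $K_{2d-5,2}(C,K_C)=0$ into $K_{g-2d+3,1}(C,K_C)=0$, and semicontinuity plus irreducibility of the space of double covers reduces everything to exhibiting, \emph{in each genus}, one cover with this vanishing. (Two corrections to the framing: this critical vanishing is the Green-conjecture statement for the canonical bundle, not the Gonality Conjecture, which concerns nonspecial bundles; and you need not prove $\Cliff(C)=2d-4$ in advance --- the upper bound $\Cliff(C)\le \gon(C)-2\le 2d-4$ is automatic once pencils pull back, and the lower bound is a \emph{consequence} of the vanishing via Green--Lazarsfeld nonvanishing.) Your base case is essentially the paper's: the double plane $\pi:S\to\PP^2$ branched along a sextic is a $K3$ surface and $C_0=\pi^{-1}(\Gamma)$ has genus exactly $d^2+1$. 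But the way to conclude there is not a pushforward/Kodaira-vanishing computation: Koszul cohomology of $K3$ sections is not controlled by vanishing theorems, and what is actually invoked is Theorem \ref{thm: AF} (Green's conjecture for curves on arbitrary $K3$ surfaces), which applies since $\gon(C_0)=2d-2\le [g/2]+1$. Your proposed ``uniformity of the family of minimal pencils'' feeding a Hirschowitz--Ramanan--Voisin analysis plays no role and would amount to re-proving a deep theorem.

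The genuine gap is precisely what you defer as ``a suitable degeneration otherwise \dots the technical heart'': the $K3$ double plane realizes only the single genus $d^2+1$ (a sextic branch curve is forced if $S$ is to be a $K3$), so for every $g>d^2+1$ your argument proves nothing, and no smooth auxiliary surface is available. The paper (Theorem \ref{thm: double}) fills this by induction on the genus. Given a smooth double cover $f:C\to\Gamma$ of genus $g$ with $K_{g-2d+3,1}(C,K_C)=0$, pick a general fiber $\{x_0,x_1\}=f^{-1}(x)$ and glue a $\PP^1$ to $C$ at $x_0$ and $x_1$; the resulting nodal curve $C'$ has genus $g+1$ and carries an admissible double cover of $\Gamma\cup_x\PP^1$, hence lies in the closure of the locus of smooth genus-$(g+1)$ double covers of degree-$d$ plane curves --- this is what legitimizes the appeal to semicontinuity in the next genus. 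Then one uses the isomorphism
$$K_{p,1}(C',\omega_{C'})\cong K_{p,1}\bigl(C,K_C(x_0+x_1)\bigr),$$
and Corollary \ref{cor: canonical} (projection of syzygies for nodal canonical curves) upgrades the genus-$g$ hypothesis to $K_{g-2d+4,1}(C,K_C(x_0+x_1))=0$, which is the critical vanishing in genus $g+1$. Some such inductive mechanism is indispensable and is absent from your proposal. For part (ii), your plan to factor a fourfold cover into a tower of double covers also fails: a general degree-$4$ cover admits no such factorization, and none is needed --- the paper starts from the \emph{cyclic} fourfold cover of $\PP^2$ branched along a quartic (again a $K3$ surface, giving genus $2d^2+1$) and runs the same admissible-cover induction.
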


In a similar vein, we have a result about triple coverings of elliptic curves.
\begin{theorem}
\label{thm: triple}
 Let $C\to E$ be a general triple covering of genus $g\geq 13$ of an elliptic curve. Then $K_{3,2}(C,K_C)=0$ and $C$ satisfies Green's Conjecture.
\end{theorem}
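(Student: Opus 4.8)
The plan is first to reduce Green's Conjecture for $C$ to the single Koszul vanishing $K_{3,2}(C,K_C)=0$. The degree-two pencils on $E$ vary in a one-dimensional family and pull back to pencils of degree $6$ on $C$, so $\gon(C)\le 6$ and hence $\Cliff(C)\le 4$. Because the minimal resolution of a canonical curve is self-dual, one has $K_{p,2}(C,K_C)\cong K_{g-2-p,1}(C,K_C)^\vee$, and vanishing in the linear strand spreads upward ($K_{q,1}=0\Rightarrow K_{q+1,1}=0$); hence $K_{3,2}(C,K_C)=0$, i.e.\ $K_{g-5,1}(C,K_C)=0$, forces $K_{p,2}(C,K_C)=0$ for every $p\le 3$. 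Together with the Green--Lazarsfeld non-vanishing $K_{\Cliff(C),2}(C,K_C)\neq 0$ and the bound $\Cliff(C)\le 4$, this pins down $\Cliff(C)=4$ and yields Green's Conjecture. Everything therefore reduces to proving the single statement $K_{3,2}(C,K_C)=0$. Note that, as remarked before the theorem, Aprodu's linear-growth criterion does not apply here, so a direct argument is needed.

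Since $K_{3,2}(C,K_C)=0$ is an open condition in moduli and the triple covers of elliptic curves of genus $g$ vary in an irreducible family, it suffices to exhibit one such cover with the vanishing; this grants the freedom to choose the covering data conveniently (for instance to take the Tschirnhausen bundle general and stable). I would realize $C$ geometrically through the Casnati--Ekedahl/Miranda structure of triple covers: the Tschirnhausen bundle $\E$ is a rank-two bundle on $E$ of degree $g-1$ (consistent with $h^0(K_C)=1+h^0(\E)=g$ via $f_*\omega_C=\OO_E\oplus\E$), and $C$ embeds as a trisection of the ruled surface $S:=\PP(\E)\to E$, with $f$ the restriction of the projection $\pi$. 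Adjunction gives $K_C=(K_S+C)|_C$, and since $K_E=\OO_E$ one has $K_S=-2\sigma_0+\pi^*\det\E$ for a section $\sigma_0$, so $K_C$ is the restriction of an explicit line bundle on $S$.

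I would then compute $K_{3,2}(C,K_C)$ through the kernel bundle $M_{K_C}$, defined by $0\to M_{K_C}\to H^0(K_C)\otimes\OO_C\to K_C\to 0$, relating the group to $H^1(C,\wedge^4 M_{K_C}\otimes K_C)$. The aim is to transfer this $H^1$ to the surface $S$ using the restriction sequence $0\to\OO_S(K_S)\to\OO_S(K_S+C)\to K_C\to 0$, and then down to $E$ via $\pi_*$ (equivalently, directly via the cover $f$), so that the computation is expressed entirely in terms of wedge, symmetric and tensor combinations of $\E$ on the elliptic curve. On $E$ these bundles are governed by Atiyah's classification: for $\E$ general of large degree, every relevant twist has vanishing $H^1$. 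The bound $g\ge 13$, i.e.\ $\deg\E=g-1\ge 12$, is exactly what makes the slopes of the bundles occurring in $\wedge^4 M_{K_C}\otimes K_C$ positive enough for this vanishing to hold.

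The main obstacle is precisely this last vanishing. Controlling $\wedge^4 M_{K_C}$ is delicate because $M_{K_C}$ has large rank $g-1$ and does not itself descend to $E$; one must filter it compatibly with the cover and track how the class in $H^1(\OO_E)$ contributed by the elliptic base propagates into the wedge powers, since that is the term most likely to obstruct the restriction from $S$ to $C$. I expect the heart of the argument to be a Butler-type semistability estimate for $M_{K_C}$ guaranteeing that all the elliptic-curve bundles produced after pushforward have slope above the threshold demanded by Atiyah's theorem, and that this threshold is first met at $g=13$. Verifying this numerical threshold, and checking that the general trisection of $S$ is a smooth curve of the correct genus with $\E$ stable, are the steps I anticipate requiring the most care.
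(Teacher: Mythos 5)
Your reduction of the theorem to the single vanishing $K_{3,2}(C,K_C)=0$ --- gonality at most $6$ from the pulled-back pencils, the duality $K_{3,2}(C,K_C)\cong K_{g-5,1}(C,K_C)^{\vee}$, upward propagation of vanishing in the linear strand, and Green--Lazarsfeld non-vanishing pinning down $\Cliff(C)=4$ --- is correct and agrees with the paper, as does the frame of semicontinuity plus irreducibility of the space of triple covers. The gap lies in the core step, where you propose to get the vanishing from $H^1$-vanishing of bundles on $E$ obtained from $\wedge^4 M_{K_C}\otimes K_C$ via pushforward, controlled by Atiyah's classification and a Butler-type semistability estimate. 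This cannot work, for a structural reason: $K_{3,2}(C,K_C)$ is not the group $H^1(C,\wedge^4 M_{K_C}\otimes K_C)$ but only the kernel of the natural map $H^1(C,\wedge^4 M_{K_C}\otimes K_C)\to \wedge^4 H^0(K_C)\otimes H^1(K_C)$, and the ambient $H^1$ \emph{never} vanishes: by Serre duality and $\det M_{K_C}=K_C^{-1}$ it is isomorphic to $H^0(C,\wedge^{g-5}M_{K_C}\otimes K_C)^{\vee}$, which contains the nonzero image of the Koszul differential from $\wedge^{g-4}H^0(K_C)$. The same obstruction appears numerically as a slope deficit that no genus bound repairs: $\mu(\wedge^4 M_{K_C}\otimes K_C)=2g-10$, while $H^1$-vanishing for a semistable bundle on $C$ requires slope $>2g-2$, a shortfall of $8$ independent of $g$; and since $f$ is finite, pushing forward to $E$ preserves cohomology, so working on the elliptic curve changes nothing. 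Hence there is no threshold ``first met at $g=13$''; what must actually be proved is surjectivity of the Koszul map $\wedge^{g-4}H^0(K_C)\to H^0(\wedge^{g-5}M_{K_C}\otimes K_C)$, i.e.\ injectivity of the map whose kernel is $K_{3,2}$, and slope/semistability estimates are incapable of isolating that kernel. This is precisely the well-known obstruction that makes Green's conjecture resistant to direct kernel-bundle computations.

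The paper sidesteps any direct computation. For the base case it takes a cyclic triple cover $S\to\PP^1\times\PP^1$ branched along a smooth curve of type $(3,3)$; then $S$ is a $K3$ surface and the preimage $C$ of a general curve of type $(2,2)$ (an elliptic curve) is a smooth $6$-gonal curve of genus exactly $13$, so Theorem \ref{thm: AF} (Green's conjecture for curves on arbitrary $K3$ surfaces) gives $K_{3,2}(C,K_C)=0$; this construction, not a slope threshold, is where the bound $g\geq 13$ comes from. The inductive step attaches rational tails at a fibre $\{x_0,x_1,x_2\}$ of $f$ to produce an admissible triple cover of a nodal curve $E\cup_t\PP^1$ of genus one higher, uses the isomorphism $K_{p,1}(C',\omega_{C'})\cong K_{p,1}(C,K_C(x_0+x_1))$ together with projection of syzygies (Corollary \ref{cor: canonical}) to carry the vanishing upward, and concludes by semicontinuity. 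If you wish to retain your Tschirnhausen-bundle setup, you would still need a degeneration or $K3$-type input of this kind for the crucial vanishing; the semistability estimate you anticipate as ``the heart of the argument'' cannot close it.
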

Curves with Clifford dimension $3$ have been classified in \cite{ELMS}. If $[X]\in \cM_g$ is such that $r(X)=3$, then $g=10$ and $X$ is the complete intersection
of two cubic surfaces in $\PP^3$. The very ample $\mathfrak g^3_9$ computes $\mathrm{Cliff}(X)=3$, whereas $\mbox{dim }W^1_6(C)=1$; each minimal pencil of
$X$ is induced by planes through a trisecant line to $X\subset \PP^3$. We prove the following result:
\begin{theorem}\label{cliffdim3}
Let $C\to X$ be a general double covering of genus $g\geq 28$ of a smooth curve $X$ with $r(X)=3$. Then $K_{9, 2}(C, K_C)=0$ and $C$ satisfies Green's Conjecture.
\end{theorem}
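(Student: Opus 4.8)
The plan is to reduce the theorem to the single Koszul vanishing $K_{9,2}(C,K_C)=0$ and then to prove that vanishing by specialising the cover onto a K3 surface. First I would compute the Clifford index. Because $X$ has Clifford dimension $3$, its Clifford index $3$ is computed \emph{only} by the $\mathfrak{g}^3_9$, and any linear series $\mathfrak{g}^r_e$ with $r\le 2$ on $X$ has Clifford index at least $4$; the minimal pencils are the $\mathfrak{g}^1_6$'s cut out by planes through trisecant lines. Pulling these back along $f$ turns the $\mathfrak{g}^1_6$ into a $\mathfrak{g}^1_{12}$ and the $\mathfrak{g}^3_9$ into a $\mathfrak{g}^3_{18}$, of Clifford indices $10$ and $12$ respectively, so $\gon(C)=12$ and $\Cliff(C)\le 10$. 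To get equality I would show that for a general cover of genus $g\ge 28$ no bundle does better: pencils contribute at least $\gon(C)-2=10$, every pull-back inherits $\Cliff\ge 10$ from the Clifford-dimension-$3$ geometry of $X$, and a genuinely new $\mathfrak{g}^r_e$ with $r\ge 2$ and Clifford index $\le 9$ (e.g.\ a $\mathfrak{g}^2_{13}$) is excluded by a Brill--Noether count adapted to the cover. Granting $\Cliff(C)=10$, the Green--Lazarsfeld nonvanishing gives $K_{10,2}(C,K_C)\neq 0$, and since the vanishing of $K_{p,2}(C,K_C)$ propagates to all smaller $p$, Green's Conjecture for $C$ becomes equivalent to the single statement $K_{9,2}(C,K_C)=0$.

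For the vanishing itself I would first treat the base genus $g=28$ by realising the cover on a K3 surface. Write $X=Y\cap Y'$ as an intersection of two cubic surfaces in $\PP^3$ and view $Y$ as a del Pezzo surface with $-K_Y=\OO_Y(1)$, so that $X\in|{-}3K_Y|$. Let $\pi\colon S\to Y$ be the double cover branched along a general $D\in|{-}2K_Y|$; then $K_S=\OO_S$, so $S$ is a K3 surface, and $C_0:=\pi^{-1}(X)\in|\pi^{*}(-3K_Y)|$ is exactly a double cover of $X$ branched over $X\cap D\in|\OO_X(2)|$, i.e.\ the case $\eta=\OO_X(1)$, $\deg\eta=9$, which is of genus $28$ (as $C_0^2=54$ on $S$). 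On the K3 surface the Clifford index of $C_0$ is computed by a restricted line bundle (Green--Lazarsfeld, Donagi--Morrison) and equals $10$, so the validity of Green's Conjecture for curves on arbitrary K3 surfaces yields $K_{9,2}(C_0,K_{C_0})=0$; upper-semicontinuity of Koszul cohomology then propagates the vanishing to the general genus-$28$ cover.

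To reach all $g\ge 28$ I would climb in genus by a nodal degeneration from this base case. Identifying a fibre pair $\{p,q\}=f^{-1}(x)$ on a genus-$g$ cover produces a $1$-nodal double cover of arithmetic genus $g+1$ whose normalisation is the genus-$g$ curve and whose smoothings are the genus-$(g+1)$ double covers of $X$; comparing the Koszul cohomology of the nodal curve with that of its normalisation and invoking semicontinuity transports $K_{9,2}=0$ from genus $g$ to genus $g+1$, hence inductively to every $g\ge 28$. The hard part is twofold. On the side of the Clifford index, one must rule out \emph{all} exceptional special series that could drop $\Cliff(C)$ below $10$, a delicate Brill--Noether analysis tailored to covers in which the hypothesis $g\ge 28$ enters. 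On the side of the vanishing, the genuine obstacle is the Koszul comparison across the node: controlling $K_{9,2}$ of the $1$-nodal cover in terms of its smooth normalisation, and ensuring that the gonality pencil and the Clifford index stay constant along the degeneration, so that the semicontinuity step is legitimate at each rung of the induction.
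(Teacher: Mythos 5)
Your skeleton is the paper's: the base case is the genus-$28$ curve $C_0=\pi^{-1}(X)$ on the $K3$ surface $\pi:S\to Y$ obtained as the double cover of the cubic surface $Y$ branched along a general member of $|-2K_Y|$, with $X\in|-3K_Y|$; Green's conjecture for sections of arbitrary $K3$ surfaces \cite{AF-Compositio} supplies the vanishing there; and a nodal degeneration plus projection of syzygies and semicontinuity climbs the genus. But there is a genuine gap, and it sits exactly at what the paper calls ``the critical point'': your sentence that on the $K3$ surface ``the Clifford index of $C_0$ is computed by a restricted line bundle (Green--Lazarsfeld, Donagi--Morrison) and equals $10$'' asserts, rather than proves, the entire content of the base case. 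The theorem of \cite{GL3} only says $\Cliff(C_0)=\Cliff(\OO_{C_0}(D))$ for some divisor $D$ on $S$; to evaluate the minimum you must (i) determine $\mathrm{Pic}(S)$, which for a general branch curve equals $f^*\mathrm{Pic}(Y)=\mathbb Z\langle f^*(h),\OO_S(R_1),\ldots,\OO_S(R_6)\rangle$ (this requires an argument in the style of \cite{AK}), and (ii) show that every class $D\equiv af^*(h)-\sum_i b_iR_i$ with $h^0,h^1\geq 2$ and $0\leq C_0\cdot D\leq 27$ satisfies $C_0\cdot D-D^2\geq 12$. The paper does precisely this arithmetic: Castelnuovo--Severi gives $C_0\cdot D-D^2\geq 9$, parity of the quadratic form forces the value $10$ under the contradiction hypothesis, then $C_0\cdot D\in\{12,18,24\}$, and each case is eliminated by Cauchy--Schwarz and parity on the resulting diophantine systems. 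Without this step your citation of \cite{AF-Compositio} yields nothing about $K_{9,2}$: if $\Cliff(C_0)$ were $\leq 9$, Green--Lazarsfeld nonvanishing would give $K_{9,2}(C_0,K_{C_0})\neq 0$, the special fiber would have extra syzygies, and your semicontinuity argument would propagate nothing.

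You also misallocate the remaining difficulty. The ``delicate Brill--Noether analysis tailored to covers'' that you flag as hard --- ruling out exceptional series dropping $\Cliff(C)$ below $10$ for the general cover --- is not needed at all: once $K_{9,2}(C,K_C)=0$ is known (by semicontinuity and the induction) and $\gon(C)\leq 12$ (pull back a $\mathfrak g^1_6$ from $X$), duality $K_{9,2}(C,K_C)^\vee\cong K_{g-11,1}(C,K_C)$ together with Green--Lazarsfeld nonvanishing forces $\Cliff(C)=10$ a posteriori, and Green's conjecture follows; this also settles the gonality for $g=28,29,30$, where Castelnuovo--Severi alone is inconclusive. Likewise, the induction step does not require ``the gonality pencil and the Clifford index to stay constant along the degeneration'': the induction transports only the Koszul vanishing, via the isomorphism $K_{p,1}(C',\omega_{C'})\cong K_{p,1}(C,K_C(x_0+x_1))$, the projection-of-syzygies statement (Corollary \ref{cor: canonical}), and upper semicontinuity, none of which sees the Clifford index of the limit; your direct identification of the two points of a fiber is equivalent to the paper's rational bridge, though the admissible-cover formulation makes it transparent that the nodal curve is a limit of smooth genus-$(g+1)$ double covers of curves with $r(X)=3$.
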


\vskip 5pt

The second aim of this paper is to study syzygies of curves with a fixed point free involution. We denote by $\cR_g$ the moduli space of pairs $[C, \eta]$ where $[C]\in \cM_g$ and $\eta\in \mbox{Pic}^0(C)-\{\OO_C\}$ is a root of the trivial bundle, that is,  $\eta^{\otimes 2}=\OO_C$. Equivalently, $\cR_g$ parametrizes \'etale double covers of curves $f:\widetilde{C}\rightarrow C$, where $g(\widetilde{C})=2g-1$ and $f_*(\OO_{\widetilde{C}})=\OO_C\oplus \eta.$ The moduli space $\cR_g$ admits a Deligne-Mumford compactification $\rr_g$ by means of stable Prym curves, that comes equipped with two morphisms
$$\pi:\rr_g\rightarrow \mm_g \ \ \ \mbox{ and }\ \ \  \chi:\rr_g\rightarrow \mm_{2g-1},$$
obtained by forgetting $\widetilde{C}$ and $C$ respectively. We refer to \cite{FL} for a detailed study of the birational geometry and intersection theory of $\rr_g$.
\vskip 3pt

One may ask whether Green's conjecture holds for a curve $[\widetilde{C}]\in \cM_{2g-1}$ corresponding to a general point
$[\widetilde{C}\stackrel{f}\rightarrow C]\in \cR_g$. Note that since $\widetilde{C}$ does not satisfy Petri's theorem
\footnote{Choose an odd theta-characteristic $\epsilon \in \mbox{Pic}^{g-1}(C)$ such that $h^0(C, \eta\otimes \epsilon)\geq 1$. Then $f^*(\epsilon)$ is a theta-characteristic on $\widetilde{C}$ with
$h^0(\widetilde{C}, f^*(\epsilon))=h^0(C, \epsilon)+h^0(C, \epsilon\otimes \eta)\geq 2,$
that is, $\widetilde{C}$ possesses a vanishing theta-null.}
the question is a little delicate. In spite of this fact we have the following answer:

\begin{theorem}\label{greencov}  Let us fix a general \'etale double cover $[f:\widetilde{C}\rightarrow C]\in \cR_g$.

(i) If $g\equiv 1 \ \mathrm{mod}\  2$, then $\widetilde{C}$ is of maximal gonality, that is, $\mathrm{gon}(\widetilde{C})=g+1$. In particular
 $\widetilde{C}$ satisfies Green's Conjecture.

(ii) If $g\equiv 0 \ \mathrm{ mod }\  2$, then $\mathrm{gon}(\widetilde{C})=g$.
\end{theorem}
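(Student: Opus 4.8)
Before turning to the author's argument, here is how I would approach Theorem~\ref{greencov}. Everything hinges on two black boxes already recalled in the introduction: Voisin's theorem for curves of odd genus and maximal gonality \cite{Voisin-odd}, and the linear growth criterion (\ref{lgc}) of \cite{A-MRL}. Since $\widetilde{C}$ has genus $\widetilde{g}=2g-1$, which is \emph{always} odd, a general curve of this genus has maximal gonality $\lfloor(\widetilde{g}+3)/2\rfloor=g+1$, and one computes $\rho(\widetilde{g},1,g)=-1$ while $\rho(\widetilde{g},1,g+1)=1$. Thus the whole problem is to locate $\gon(\widetilde{C})$ between $g$ and $g+1$ and to control $W^1_{g+1}(\widetilde{C})$. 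For part (i) it suffices to prove $W^1_g(\widetilde{C})=\emptyset$: then $\gon(\widetilde{C})=g+1$ is maximal and Green's Conjecture follows from \cite{Voisin-odd}. For part (ii) I must instead show that $\gon(\widetilde{C})=g$ with $\dim W^1_g(\widetilde{C})\le 0$ and $\dim W^1_{g+1}(\widetilde{C})=1$; since $\widetilde{g}-g+2=g+1$, $\rho(\widetilde{g},1,g+1)=1$ and $d=g\le \widetilde{g}/2+1$, this is precisely the linear growth condition (\ref{lgc}) for $\widetilde{C}$, so Green's Conjecture follows from \cite{A-MRL}.

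The first reduction is to dispose of the pencils invariant under the involution $\sigma$ of the double cover $f\colon\widetilde{C}\to C$. A base-point-free $A$ with $\sigma^*A\cong A$ forces $\phi_A\colon\widetilde{C}\to\PP^1$ to be equivariant for a (possibly trivial) involution of $\PP^1$. If that involution is trivial, then $A=f^*M$ is pulled back from $C$, so $\deg A=2\deg M\ge 2\,\gon(C)\ge g+2$ for a general $[C,\eta]\in\cR_g$, too large to be minimal. If the involution is nontrivial, then $C=\widetilde{C}/\sigma$ acquires a pencil of degree $\deg A$ and $\widetilde{C}$ is recovered as the base change of the double cover $\PP^1\to\PP^1/\sigma$ branched at two points; but such a base change is ramified over the preimages of the branch locus, contradicting that $f$ is \'etale. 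Hence every minimal pencil, of degree $g$ or $g+1$, satisfies $\sigma^*A\not\cong A$, and I am reduced to the study of non-invariant pencils.

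The heart of the matter is then a Prym-theoretic count. For a non-invariant $A$ the sheaf $E=f_*A$ is a stable rank-two bundle on $C$ with $\det E=\mathrm{Nm}(A)\otimes\eta$, $\deg E=\deg A$ and $h^0(C,E)=h^0(\widetilde{C},A)\ge 2$, and $E$ is $\eta$-invariant; conversely such bundles recover $A$ up to the exchange $A\leftrightarrow\sigma^*A$. I would analyse the associated Prym--Petri map using the $\sigma$-eigenspace decomposition $H^0(\widetilde{C},K_{\widetilde{C}})=H^0(C,K_C)\oplus H^0(C,K_C\otimes\eta)$, of dimensions $g$ and $g-1$, in order to exhibit a mod-two invariant of the locus of such pencils whose value is dictated by the parity of $g$. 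Concretely, I expect the expected $(-1)$-dimensional locus $W^1_g(\widetilde{C})$ to be empty for odd $g$ and finite nonempty for even $g$, and in the latter case adding a free base point to produce exactly a one-dimensional $W^1_{g+1}(\widetilde{C})$ with no further components, giving $\dim W^1_{g+1}(\widetilde{C})=1$. A more robust alternative is to degenerate $[\widetilde{C}\to C]$ to an admissible double cover on the boundary of $\rr_g$ in the sense of \cite{FL}, read off the limit pencils and their parity by a limit-linear-series computation, and conclude by upper semicontinuity of $\dim W^1_k$.

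The main obstacle is exactly this parity step: since the relevant Brill--Noether number $\rho(\widetilde{g},1,g)=-1$ is negative, the minimal pencil for even $g$ cannot come from general position and must be manufactured from the special geometry of the cover, while for odd $g$ one must conversely prove its nonexistence; both directions rest on the delicate $\sigma$-equivariant tangent-obstruction analysis. Once these two numerical inputs are secured, the conclusion is immediate: for odd $g$ the maximal gonality $g+1$ feeds into \cite{Voisin-odd}, and for even $g$ the equalities $\gon(\widetilde{C})=g$ and $\dim W^1_{g+1}(\widetilde{C})=1$ verify the linear growth condition (\ref{lgc}), so \cite{A-MRL} yields Green's Conjecture for $\widetilde{C}$ in both parities.
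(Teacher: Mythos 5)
Your framing is exactly the paper's: for odd $g$ it suffices to show $W^1_g(\widetilde{C})=\emptyset$ (maximal gonality, then \cite{Voisin-odd}), and for even $g$ it suffices to show $\mathrm{gon}(\widetilde{C})=g$ together with $\dim W^1_{g+1}(\widetilde{C})=1$, which is condition (\ref{lgc}) for a curve of genus $2g-1$ and gonality $g$, so that \cite{A-MRL} applies. But the actual content of the theorem --- the two Brill--Noether statements --- is never proved in your proposal. Your primary route, a ``$\sigma$-equivariant Prym--Petri analysis'' producing ``a mod-two invariant'' of the locus of non-invariant pencils, is stated as an expectation, not an argument: you define no invariant, prove no transversality, and exhibit no mechanism by which the parity of $g$ enters; you yourself flag this as ``the main obstacle,'' and it is left entirely open. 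You also miss that half of the even-genus case is elementary and holds for \emph{every} cover, not just by ``special geometry'': since the difference variety $C_{g/2}-C_{g/2}$ covers $\mathrm{Pic}^0(C)$, one writes $\eta=\OO_C(D-E)$ with $D,E$ effective of degree $g/2$, and then $A:=f^*\OO_C(E)$ satisfies $h^0(\widetilde{C},A)=h^0(C,\OO_C(E))+h^0(C,\OO_C(D))\geq 2$, so $W^1_g(\widetilde{C})\neq\emptyset$ whenever $g$ is even. What genuinely needs proof is the lower bound on gonality and the dimension bound on $W^1_{g+1}$.

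The paper establishes these by the route you mention only as a ``more robust alternative'' and do not execute: degeneration to an explicit admissible cover in $\pi^*(\Delta_1)\subset\rr_g$. It takes a general $[C,p]\in\cM_{g-1,1}$ and an elliptic curve $E$ with $\eta_E\in\mathrm{Pic}^0(E)[2]$ giving $f_E:\widetilde{E}\to E$ with $\{x,y\}=f_E^{-1}(p)$, so that $2x\equiv 2y$, and sets $X_g:=C_1\cup\widetilde{E}\cup C_2$ with two copies of $(C,p)$ glued at $x$ and $y$. For a limit $\mathfrak{g}^1_g$ on $X_g$, additivity of adjusted Brill--Noether numbers and \cite{EH} Theorem 1.1 applied to the general pointed curves $(C_i,p_i)$ force $\rho(l_{\widetilde{E}},x,y)=-1$, and compatibility of vanishing sequences at the two nodes yields the linear equivalence $a_0\cdot x+(g-a_0)\cdot y\equiv(g-a_0)\cdot x+a_0\cdot y$ on $\widetilde{E}$; since $x-y$ is $2$-torsion this forces $g$ to be even. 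This is where parity enters --- through the torsion point on the elliptic bridge, not through any Petri-type statement. The same machinery shows, for even $g$, that every component of $\overline{G}^1_{g+1}(X_g)$ has dimension $1$, and the Eisenbud--Harris theory (not naive upper semicontinuity of $\dim W^1_k$, which would require knowing how $W^1_k$ of the singular fiber relates to limit linear series) transfers both statements to a general $[\widetilde{C}\to C]\in\cR_g$. In short: your reduction and choice of black boxes are correct and agree with the paper, but the key parity computation that constitutes the theorem is missing.
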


We provide two proofs that $\widetilde{C}$ satisfies Green's conjecture for odd $g$. The statement concerning Green's conjecture follows via condition (\ref{lgc}).
As we explain in what follows, we also establish that $\widetilde{C}$ satisfies Green's conjecture for even $g$.
The proof, which we now briefly explain
can be viewed as a counterpart of Voisin's result \cite{Voisin-even} and has the advantage of singling out an explicit locus in $\cR_g$ where Green's conjecture holds.
\vskip 4pt

Let $\F_g^{\mathfrak{N}}$ be the $11$-dimensional moduli space of genus $g$ Nikulin surfaces. A very general\footnote{That is, a point outside a countable union of (Noether-Lefschetz) divisors on $\F_g^{\mathfrak{N}}$.} point
of
$\F_g^{\mathfrak{N}}$ corresponds to a double cover $f:\widetilde{S}\rightarrow S$ of a $K3$ surface, branched
along a set $R_1+\cdots+R_8$ of eight mutually disjoint $(-2)$-curves, as well as a linear system $L\in \mbox{Pic}(S)$,
where $L^2=2g-2$ and $L\cdot R_i=0$ for $i=1, \ldots, 8$. We choose a smooth curve $C\in |L|$,
set $\widetilde{C}:=f^{-1}(C)\subset \widetilde{S}$. Then
the restriction $f_C:\widetilde{C}\rightarrow C$ defines an element of $\cR_g$.
We show that the canonical bundle of $\widetilde{C}$ has minimal syzygies when the lattice $\mbox{Pic}(S)$ is minimal, that is, of rank $9$.

\begin{theorem}\label{nik1}
Let $f_C:\widetilde{C}\rightarrow C$ be a double cover corresponding to a very general Nikulin surface of genus $g$.

(i) If $g\equiv 1 \ \mathrm{mod}\  2$, then $\mathrm{gon}(\widetilde{C})=g+1$.

(ii) If $g\equiv 0 \ \mathrm{ mod }\  2$, then $\mathrm{gon}(\widetilde{C})=g$.

\noindent In both cases, the curve
 $\widetilde{C}$ verifies Green's Conjecture.
\end{theorem}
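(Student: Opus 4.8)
The plan is to split the statement into a Brill--Noether computation and a syzygy deduction, and to carry out the computation geometrically on the K3 surface carrying $\widetilde{C}$. Since $L\cdot R_i=0$, a general $C\in|L|$ is disjoint from the branch curves, so $f_C:\widetilde{C}\to C$ is \'etale and Riemann--Hurwitz gives $g(\widetilde{C})=2g-1$; in particular $\widetilde{C}$ always has \emph{odd} genus. Granting the gonality values, Green's Conjecture is deduced as follows. In case (ii) the pair $\bigl(\gon(\widetilde{C}),\dim W^1_{g+1}(\widetilde{C})\bigr)=(g,1)$ is exactly the linear growth condition (\ref{lgc}) for $\widetilde{C}$ with $d=g$ (indeed $(2g-1)-d+2=g+1$ and $\rho(2g-1,1,g+1)=1$), so Green's Conjecture follows from \cite{A-MRL}. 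In case (i) the gonality $g+1$ is maximal for a curve of genus $2g-1$, and Green's Conjecture then follows from Voisin's solution in odd genus \cite{Voisin-odd}, since a maximal-gonality curve of odd genus lies off the divisor of failure of the conjecture. Thus everything reduces to computing $\gon(\widetilde{C})$ and, in the even case, $\dim W^1_{g+1}(\widetilde{C})$; this also recovers, for the special Nikulin curves, the gonality values of Theorem \ref{greencov}.

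To compute these I would use the embedding $\widetilde{C}\subset\widetilde{S}$, where $\widetilde{S}$ is the K3 surface underlying the symplectic (Nikulin) involution, i.e.\ the minimal model of the double cover of $S$ branched along $R_1+\cdots+R_8$, with $\widetilde{C}$ in the class $\widetilde{L}=f^*L$ and $\widetilde{L}^2=4g-4$. The minimality hypothesis on $\Pic(S)$ translates, via the involution, into $\Pic(\widetilde{S})$ being a rank-nine lattice containing $\langle\widetilde{L}\rangle\perp E_8(-2)$, where $E_8(-2)$ is the anti-invariant lattice of the involution. I would then invoke the theory of Clifford indices of curves on K3 surfaces (Green--Lazarsfeld, Ciliberto--Pareschi, Knutsen): the Clifford index of every smooth $\widetilde{C}\in|\widetilde{L}|$ is computed by a line bundle $M\in\Pic(\widetilde{S})$, so that $\Cliff(\widetilde{C})=\min_M\,(\widetilde{L}\cdot M-M^2-2)$, the minimum taken over classes with $h^0(M),h^0(\widetilde{L}-M)\ge 2$. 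The problem becomes the purely lattice-theoretic minimisation over $M=a\widetilde{L}+\theta$, $\theta\in E_8(-2)$. (Alternatively, one may attempt a direct Voisin-type Koszul computation on $\widetilde{S}$, in the spirit of \cite{Voisin-even}.)

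The key point is that this minimum is attained by the half-class $M=\tfrac12(\widetilde{L}+\theta_0)$, with $\theta_0\in E_8(-2)$ of square $-4$ (a root, scaled): one computes $\widetilde{L}\cdot M=2g-2$, $M^2=g-2$, hence $\widetilde{L}\cdot M-M^2-2=g-2$, so $\Cliff(\widetilde{C})\le g-2$. Crucially, one checks that $M$ lies in $\Pic(\widetilde{S})$ for only one parity of $g$: by the gluing between $\langle\widetilde{L}\rangle$ and $E_8(-2)$, which is governed by $\widetilde{L}^2=4(g-1)\bmod 8$, the half-class is integral precisely when $g$ is even. This is the origin of the dichotomy. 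For even $g$ this yields $\Cliff(\widetilde{C})=g-2$, the minimum over the lattice being no lower; for odd $g$ the half-class drops out, the minimum is the maximal value $g-1$, and $\gon(\widetilde{C})=g+1$.

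The main obstacle is to pass from the Clifford index to the \emph{gonality} and to the exact dimension of $W^1_{g+1}(\widetilde{C})$, for two reasons. First, the class $M$ above has $h^0(\widetilde{S},M)=\tfrac{g}{2}+1$, so a priori it computes $\Cliff(\widetilde{C})$ through a higher-dimensional series rather than a pencil; I must show that $\widetilde{C}$ nevertheless carries a genuine $\mathfrak{g}^1_g$, i.e.\ that its Clifford dimension is one. I would do this by a Lazarsfeld--Mukai bundle argument, analysing the rank-two bundle $E_{\widetilde{C},A}$ attached to a minimal pencil $A$ (with $\det E=\widetilde{L}$, $c_2(E)=g$, $h^0(E)=g+2$) and showing it is destabilised exactly by $M$. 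This step is delicate because $\widetilde{C}$ fails Petri's theorem --- it carries the vanishing theta-null $f^*(\epsilon)$ --- so the usual Brill--Noether genericity is unavailable and one must argue directly from the lattice. Second, to obtain $\dim W^1_{g+1}(\widetilde{C})=1$ rather than a mere inequality, I must prove that the only pencils of degree $g$ are the finitely many restrictions arising from the $(-4)$-classes $\theta_0$ (together with their conjugates under the involution), so that $W^1_g(\widetilde{C})$ is finite and $W^1_{g+1}(\widetilde{C})$ is swept out one-dimensionally by the addition of a base point. Establishing these facts uniformly for all smooth $C\in|L|$, and verifying the parity-dependent integrality of $M$ precisely, is the crux of the argument.
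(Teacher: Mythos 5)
Your lattice-theoretic computation of $\gon(\widetilde{C})$ is essentially the paper's own argument: the paper also works on the covering $K3$ surface (its $Y$, your $\widetilde{S}$), invokes \cite{GL3} to reduce the Clifford index to a minimisation over $\Pic(Y)\supset \mathbb{Z}\cdot\widetilde{C}\oplus E_8(-2)$, and identifies exactly your dichotomy in Proposition \ref{minlattice}: the glue vector $\bigl(\frac{\widetilde{C}}{2},\frac{v}{2}\bigr)$ has $v^2=-4$ when $g$ is even (giving $\Cliff(\widetilde{C})=g-2$ and $\gon(\widetilde{C})=g$) and $v^2=-8$ when $g$ is odd (giving the maximal value $g-1$ and $\gon(\widetilde{C})=g+1$). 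Your numerical checks agree with the paper's. Incidentally, the Lazarsfeld--Mukai bundle analysis you propose to rule out higher Clifford dimension is not needed: since $g(\widetilde{C})=2g-1$ is odd, the paper simply notes that $r(\widetilde{C})=1$ is automatic, so $\gon(\widetilde{C})=\Cliff(\widetilde{C})+2$.

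The genuine gap is in your deduction of Green's conjecture in the even case. You propose to verify the linear growth condition (\ref{lgc}), which for genus $2g-1$ and gonality $g$ requires $\dim W^1_{g+1}(\widetilde{C})=1$, and you yourself flag the finiteness of $W^1_g(\widetilde{C})$ and the computation of $\dim W^1_{g+1}(\widetilde{C})$ as ``the crux'' without proving them. This cannot be waved away: \cite{GL3} controls only the line bundles \emph{computing} the Clifford index, not all pencils of degree $g$ or $g+1$ (which need not be restrictions of classes in $\Pic(Y)$), and since $\widetilde{C}$ carries a vanishing theta-null, no Brill--Noether genericity argument is available. Indeed, the paper never establishes $\dim W^1_{g+1}(\widetilde{C})=1$ for Nikulin sections: that statement is proved only for a \emph{general} \'etale double cover (Theorem \ref{greencov}), by a limit-linear-series degeneration on $\rr_g$, and it is left open for the special covers lying on Nikulin surfaces. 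The paper's route avoids the issue entirely: $\widetilde{C}$ is a smooth curve on the $K3$ surface $Y$, so Green's conjecture for $\widetilde{C}$ follows at once from \cite{AF-Compositio} (Theorem \ref{thm: AF}; for even $g$ one has $\gon(\widetilde{C})=g=[\frac{2g-1}{2}]+1$, so its hypothesis is met), and the entire content of Theorem \ref{nik1} is then the gonality computation. Your odd-genus deduction (maximal gonality plus Voisin's odd-genus theorem and the Hirschowitz--Ramanan divisor argument) is sound; replacing your even-genus step by the appeal to \cite{AF-Compositio} --- which you already have available, having placed $\widetilde{C}$ on a $K3$ surface --- closes the gap and recovers the paper's proof.
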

We point out that in this situation both $C$ and $\widetilde{C}$ are sections of (different) $K3$ surfaces,
hence by \cite{AF-Compositio} they verify Green's Conjecture. The significance of Theorem \ref{nik1} lies
 in showing that the Brill-Noether theory of $C$ and $\widetilde{C}$ is the one expected from a general element of $\cR_g$.

\section{Koszul cohomology}
\label{sec: Koszul}

We fix a smooth algebraic curve $C$, a line bundle
$L$ on $C$ and a space of sections $W\subset H^0(C, L)$. Given
two integers $p$ and $q$, the Koszul cohomology group  $K_{p,q}(C,L,W)$ is the cohomology at the middle of the complex
$$
\wedge^{p+1}W\otimes H^0(C,L^{\otimes(q-1)})\longrightarrow
\wedge^{p}W\otimes H^0(C,L^{\otimes q})\longrightarrow
\wedge^{p-1}W\otimes H^0(C,L^{\otimes(q+1)})
$$
If $W=H^0(C,L)$ we denote the corresponding Koszul cohomology group
by $K_{p,q}(C,L)$.

\medskip

For a globally generated line bundle $L$,  Lazarsfeld \cite{Lazarsfeld-ICTP} provided a
description of Koszul cohomology in terms of kernel bundles.
If $W\subset H^0(C,L)$ generates $L$ one defines
$
M_W:=\mathrm{Ker}\{W\otimes\OO_C\to L\}.
$
When $W = H^0(C,L)$, we write $M_W := M_L$.
The kernel of the Koszul differential
coincides with
\begin{eqnarray*}
H^0(C,\wedge^p M_W\otimes L^{q})\subset \wedge^{p}W\otimes H^0(C,L^{\otimes q})
\end{eqnarray*}
and hence one has the following isomorphism:

\begin{eqnarray*}
K_{p,q}(C,L,W) & \cong &
\mathrm{Coker}\Bigl\{\wedge^{p+1}W\otimes H^0(C,L^{q-1})\to H^0(C,\wedge^p M_W
\otimes L^{q})\Bigr\} .
\end{eqnarray*}

Note that for $q=1$ the hypothesis of being globally generated
is no longer necessary, and we do have a similar description
for $K_{p,1}$ with values in any line bundle. Indeed, if $\mathrm{Bs}|L|=B$,
and $M_L$ is the kernel of the evaluation map on global sections, then
$M_L\cong M_{L(-B)}$. Applying the definition, the identification $H^0(C,L(-B))\cong H^0(C,L)$
and the inclusion $H^0(C,L(-B)^{\otimes 2})\subset H^0(C,L^{\otimes 2})$
induce an isomorphism, for any $p$, between $H^0(C,\wedge^p M_{L(-B)}
\otimes L(-B))$ and $H^0(C,\wedge^p M_L\otimes L)$.
In particular,  $K_{p,1}(C,L(-B))\cong K_{p,1}(C,L)$ and
\begin{eqnarray*}
K_{p,1}(C,L) & \cong &
\mathrm{Coker}\Bigl\{\wedge^{p+1}H^0(C,L)\to H^0(C,\wedge^p M_L
\otimes L)\Bigr\}
\end{eqnarray*}
as claimed.

\subsection{Projections of syzygies}

Let $L$ be a line bundle
on $C$ and assume that $x\in C$ is not
a base point of $L$.
Setting $W_x:=H^0(C, L(-x))$, we have
an induced short exact sequence
$$
0\longrightarrow W_x\longrightarrow H^0(C,L)\longrightarrow
{\mathbb C}_x\longrightarrow 0.
$$
From the restricted Euler
sequences corresponding to $L$ and $L(-x)$ respectively,
we obtain an exact sequence
$$
0\longrightarrow M_{L(-x)}\longrightarrow M_L\longrightarrow {\mathcal O}_C(-x)\longrightarrow 0,
$$
and further, for any integer $p\geq 0$,
$$
0\longrightarrow \wedge^{p+1} M_{L(-x)}\otimes L
\longrightarrow \wedge^{p+1}M_L\otimes L\longrightarrow
\wedge^pM_{L(-x)}\otimes L(-x).
$$
The exact sequence
of global sections, together with the natural sequence
$$
0\longrightarrow \wedge^{p+2}W_x\longrightarrow
\wedge^{p+2}H^0(C,L)\longrightarrow
\wedge^{p+1}W_x\longrightarrow 0,
$$
induce an exact sequence
$$
0\rightarrow K_{p+1,1}(C,L,W_x)\longrightarrow
K_{p+1,1}(C,L)\stackrel{\mathrm{pr_x}}\longrightarrow K_{p,1}(C,L(-x)),
$$
where the induced map $\mbox{pr}_x:K_{p+1,1}(C,L){\rightarrow}K_{p,1}(C,L(-x))$
is the {\em projection of syzygies} map centered at $x$. Nonzero Koszul classes survive when they
are projected from general points:

\begin{proposition}
If $0\ne \alpha\in K_{p+1,1}(C,L)$, then $\mathrm{pr}_x(\alpha)\ne 0\in K_{p, 1}(C, L(-x))$ for a general point $x\in C$.
\end{proposition}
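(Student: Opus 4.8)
The plan is to prove that the \emph{degeneracy locus}
$$Z(\alpha):=\{x\in C:\mathrm{pr}_x(\alpha)=0\}$$
is a proper closed subset of $C$; since $C$ is irreducible, its complement is then dense and a general point works. By the exact sequence preceding the statement, $Z(\alpha)$ is precisely the locus where $\alpha$ lies in the image of $K_{p+1,1}(C,L,W_x)\to K_{p+1,1}(C,L)$. Throughout I represent $\alpha$ by a section $0\ne s_\alpha\in H^0(C,\wedge^{p+1}M_L\otimes L)$ via Lazarsfeld's description, with $s_\alpha\notin\mathrm{Im}\{\wedge^{p+2}H^0(L)\to H^0(\wedge^{p+1}M_L\otimes L)\}$, and I write $r_x^{*}\colon H^0(\wedge^{p+1}M_L\otimes L)\to H^0(\wedge^{p}M_{L(-x)}\otimes L(-x))$ for the map on sections induced by the surjection $\wedge^{p+1}M_L\otimes L\to\wedge^{p}M_{L(-x)}\otimes L(-x)$ coming from $M_L\to\OO_C(-x)$. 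We may assume $h^0(L)\ge p+2$, since otherwise $\rk M_L\le p$, so $\wedge^{p+1}M_L=0$ and $K_{p+1,1}(C,L)=0$, making the statement vacuous.

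The first, fully geometric, step is the cochain-level vanishing $\bigcap_{x\in C}\ker(r_x^{*})=0$. Indeed $\ker(r_x^{*})=H^0(\wedge^{p+1}M_{L(-x)}\otimes L)$, consisting of sections landing in the subbundle $\wedge^{p+1}M_{L(-x)}\subset\wedge^{p+1}M_L$, so the intersection is $H^0$ of the subsheaf $\bigl(\bigcap_x\wedge^{p+1}M_{L(-x)}\bigr)\otimes L$. The fibre of $M_L$ at $y$ is $W_y=H^0(L(-y))$, and $M_{L(-x)}$ is the kernel of $M_L\to\OO_C(-x)$, whose fibre map is $w\mapsto w(x)$; since the evaluations $\{\mathrm{ev}_x\}_x$ span $W_y^{\vee}$, the contraction criterion $\omega\in\wedge^{p+1}\ker(\mathrm{ev}_x)\Leftrightarrow\iota_{\mathrm{ev}_x}\omega=0$ forces $\bigcap_x\wedge^{p+1}W_y^{(x)}=0$ in every fibre. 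Hence the subsheaf has zero generic rank and, being torsion free, vanishes; in particular $H^0(\wedge^{p+1}M_L)=0$. It follows that $\{x:r_x^{*}(s_\alpha)=0\}$ is a proper closed subset, so $r_x^{*}(s_\alpha)\ne0$ for general $x$.

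The main obstacle is that $\mathrm{pr}_x(\alpha)=0$ is equivalent not to $r_x^{*}(s_\alpha)=0$ but to the weaker condition $r_x^{*}(s_\alpha)\in B_x:=\mathrm{Im}\{\wedge^{p+1}W_x\xrightarrow{d}H^0(\wedge^{p}M_{L(-x)}\otimes L(-x))\}$, i.e. that the projected section is a Koszul coboundary. My plan to handle this is to organize the data into coherent sheaves over the parameter $C$: the family of differentials assembles into a morphism $\wedge^{p+1}M_L\to\G$, where $\G$ is the sheaf with fibre $H^0(\wedge^{p}M_{L(-x)}\otimes L(-x))$, and $x\mapsto r_x^{*}(s_\alpha)$ is a section $\sigma$ of $\G$. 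If $Z(\alpha)=C$ then $\sigma$ takes values pointwise in the image subsheaf $\mathrm{Im}(d)$, and one would like to lift $\sigma$ to a global $\xi\in H^0(\wedge^{p+1}M_L)$ with $d(\xi)=\sigma$: since $H^0(\wedge^{p+1}M_L)=0$ this would force $\xi=0$, hence $\sigma=0$, contradicting the previous step. The difficulty is precisely that a section lying pointwise in $\mathrm{Im}(d)$ need not lift globally, the obstruction sitting in $H^1$ of the kernel sheaf of $d$; this is the part of the argument that I expect to require the most care.

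I would therefore avoid forcing a global lift and instead argue by genericity. First, $Z(\alpha)$ is closed, by upper semicontinuity of the dimension of Koszul cohomology in the family $\{L(-x)\}_{x\in C}$. Second, to see that $Z(\alpha)\ne C$ it suffices to produce one good point, and here one iterates the projection: using the identification $K_{p+1,1}(C,L,W_x)\cong K_{p+1,1}(C,L(-x))$ valid for $q=1$ (the isomorphism $K_{p,1}(C,L(-B))\cong K_{p,1}(C,L)$ recalled above), a class surviving in every kernel $\ker(\mathrm{pr}_x)$ would descend through $K_{p+1,1}(C,L(-x_1-\cdots-x_k))$ for general points $x_i$. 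Taking $k=h^0(L)-p-1$, a general such divisor $D$ satisfies $h^0(L(-D))=p+1$, so $\rk M_{L(-D)}=p$, $\wedge^{p+1}M_{L(-D)}=0$, and $K_{p+1,1}(C,L(-D))=0$, which is incompatible with $\alpha\ne0$ surviving all projections. The bookkeeping to be verified is the compatibility of the successive projections with these identifications as the line bundle shrinks; granting it, combining closedness with nonemptiness of the good locus yields $\mathrm{pr}_x(\alpha)\ne0$ for general $x$.
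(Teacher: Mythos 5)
Your Steps 1--2 are sound: the fibrewise contraction argument does show $\bigcap_{x}H^0(\wedge^{p+1}M_{L(-x)}\otimes L)=0$, and you correctly isolate the real difficulty, namely that $\mathrm{pr}_x(\alpha)=0$ only says the projected cochain is a Koszul coboundary. The proof breaks exactly at the step designed to overcome this: the claimed identification $K_{p+1,1}(C,L,W_x)\cong K_{p+1,1}(C,L(-x))$ is false. It is not an instance of the isomorphism recalled in the paper: that isomorphism, $K_{p,1}(C,L)\cong K_{p,1}(C,L(-B))$ with $B=\mathrm{Bs}|L|$, rests on the equality $H^0(C,L(-B))=H^0(C,L)$, whereas here $x$ is \emph{not} a base point of $|L|$, so $W_x=H^0(L(-x))\subsetneq H^0(L)$. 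What is true goes the wrong way for your purposes: since $M_{W_x}=M_{L(-x)}$ and both groups are quotients by the same coboundary space $\delta(\wedge^{p+2}W_x)$, one only gets a natural injection $K_{p+1,1}(C,L(-x))\hookrightarrow K_{p+1,1}(C,L,W_x)=\ker(\mathrm{pr}_x)$, whose cokernel is $H^0(\wedge^{p+1}M_{L(-x)}\otimes L)/H^0(\wedge^{p+1}M_{L(-x)}\otimes L(-x))$ and is nonzero in general. So a class in $\ker(\mathrm{pr}_x)$ need not descend to $K_{p+1,1}(C,L(-x))$ at all, and the inductive shrinking of the line bundle never gets started. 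Moreover, the statement ``every class killed by all general projections descends'' is essentially equivalent to the proposition itself (if the proposition holds, the only such class is $0$), so any repair of this step would have to contain the entire content of the proof; the argument is structurally circular.

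Concrete failure: take $C$ elliptic, $\deg L=5$, $p=1$. Then $K_{2,1}(C,L)\cong\mathbb{C}^5$ and $K_{1,1}(C,L(-x))\cong\mathbb{C}^2$, so $\ker(\mathrm{pr}_x)=K_{2,1}(C,L,W_x)$ has dimension at least $3$, while $K_{2,1}(C,L(-x))=0$ (the elliptic quartic is a complete intersection of two quadrics, with only the Koszul syzygy, which is not linear). A second warning sign: nothing in your argument uses $p\geq 1$, yet the proposition is false for $p=0$ because $K_{0,1}(C,M)=0$ for \emph{every} line bundle $M$; e.g.\ for $C$ elliptic and $\deg L=4$, every $0\neq\alpha\in K_{1,1}(C,L)\cong\mathbb{C}^2$ satisfies $\mathrm{pr}_x(\alpha)=0$ for all $x$, while your descent would place a nonzero class in $K_{1,1}(C,L(-x))=0$ (a plane cubic lies on no conic). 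Your point (a) is also not a consequence of semicontinuity as stated---membership of a fixed vector in a family of subspaces with jumping dimensions is in general neither open nor closed---but that part is repairable by restricting to the open locus where $\dim\ker(\mathrm{pr}_x)$ is constant. Finally, note that the paper itself does not prove this proposition: it refers to \cite{A-MZ02} for a complete semicontinuity-based proof, which is where a correct argument can be found.
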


We record some immediate consequences and refer to \cite{A-MZ02} for complete proofs
based on semicontinuity.

\begin{corollary}
\label{cor: projection}
Let $L$ be a line bundle on a curve $C$ and $x\in C$ a
point.
If $L(-x)$ is nonspecial and $K_{p,1}(C,L(-x))=0$ then $K_{p+1,1}(C,L)=0$.
\end{corollary}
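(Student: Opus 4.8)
The plan is to prove the statement by contradiction, combining the projection-of-syzygies map with the semicontinuity of Koszul cohomology in the family $\{L(-x)\}_{x\in C}$. Suppose that $K_{p+1,1}(C,L)\neq 0$ and choose a nonzero class $0\neq\alpha\in K_{p+1,1}(C,L)$. By the Proposition, the projected class $\mathrm{pr}_x(\alpha)$ is nonzero in $K_{p,1}(C,L(-x))$ for a general point $x\in C$; in particular $K_{p,1}(C,L(-x))\neq 0$ for general $x$. The whole point is then to contradict this using the single vanishing hypothesis $K_{p,1}(C,L(-x_0))=0$ at the given point $x_0$ where $L(-x_0)$ is nonspecial.

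First I would observe that non-speciality is an open condition: since $\deg L(-x)=\deg L-1$ is independent of $x$, the speciality $h^1(C,L(-x))$ is upper semicontinuous, so the assumption $h^1(C,L(-x_0))=0$ forces $L(-x)$ to be non-special, and $h^0(C,L(-x))=\deg L-g$ to be constant, for all $x$ in an open neighborhood of $x_0$. Over this open set the groups $K_{p,1}(C,L(-x))$ are computed, via the kernel-bundle description recalled above, as the cokernel of a map of vector bundles of locally constant rank, namely $\wedge^{p+1}H^0(C,L(-x))\to H^0(C,\wedge^pM_{L(-x)}\otimes L(-x))$. Consequently $x\mapsto\dim K_{p,1}(C,L(-x))$ is upper semicontinuous, and the vanishing at $x_0$ propagates: $K_{p,1}(C,L(-x))=0$ for all $x$ in an open neighborhood of $x_0$, hence for a general $x\in C$. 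This contradicts the conclusion of the previous paragraph, and therefore $K_{p+1,1}(C,L)=0$.

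The main obstacle is exactly this semicontinuity step, which is where the non-speciality hypothesis is indispensable: without it the dimensions $h^0(C,L(-x))$ could jump, the Koszul groups would no longer assemble into the cohomology of a complex of vector bundles of constant rank, and the propagation of the vanishing from the special point $x_0$ to the general point supplied by the Proposition would break down. The remaining bookkeeping---checking that $h^0(C,L(-x)^{\otimes 2})$ is likewise controlled so that the full Koszul complex is a complex of bundles over the family, and that the kernel-bundle identification is compatible with the family structure---is routine and is precisely the content of the semicontinuity arguments of \cite{A-MZ02}, which I would simply cite.
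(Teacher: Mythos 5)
Your proposal is correct and takes essentially the same approach as the paper: the paper records this corollary as an immediate consequence of the projection-of-syzygies Proposition, deferring the semicontinuity details to \cite{A-MZ02}, and your argument---projecting a hypothetical nonzero class in $K_{p+1,1}(C,L)$ to a general point and propagating the vanishing $K_{p,1}(C,L(-x))=0$ from $x$ to general points via semicontinuity over the family $\{L(-y)\}_{y\in C}$, with nonspeciality ensuring the local constancy of $h^0$---is precisely that intended argument. The bookkeeping you leave to \cite{A-MZ02} (constancy of $h^0(C,L(-y)^{\otimes 2})$ and compatibility of the kernel-bundle description with the family) is exactly what the paper itself cites that reference for.
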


Going upwards, it follows from Corollary \ref{cor: projection}
that, for a nonspecial $L$,
the vanishing of $K_{p,1}(C,L)$ implies that $K_{p+e,1}(C,L(E))=0$, for any effective
divisor $E$ of degree $e$.

For canonical nodal curves, we have a similar result:

\begin{corollary}
\label{cor: projection can}
Let $L$ be a line bundle on a curve $C$ and $x,y\in C$ two
points.
If $K_{p,1}(C,K_C)=0$ then $K_{p+1,1}(C,K_C(x+y))=0$.
\end{corollary}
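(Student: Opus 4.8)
The plan is to reduce the statement about the canonical bundle $K_C(x+y)$ to the non\-special setting covered by Corollary \ref{cor: projection}, applying the projection-of-syzygies machinery twice. The key observation is that although $K_C$ is special, the line bundle $K_C(x+y)$ is non\-special for a general choice of $x$ and $y$, so the ``going upwards'' principle recorded after Corollary \ref{cor: projection} does not apply directly, but a two-step projection will land us in a situation we can control.

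First I would fix the point $x$ and consider the projection map
$$
\mathrm{pr}_x: K_{p+1,1}(C, K_C(x+y)) \longrightarrow K_{p,1}(C, K_C(y)),
$$
which exists as soon as $x$ is not a base point of $K_C(x+y)$ (automatic for $g\geq 2$). The target involves $K_C(y)$, which is still non\-special only if $h^0(C, K_C(y)) = g$, i.e.\ $h^1(C,\OO_C(y))=0$ for $g\ge 2$, so $K_C(y)$ is non\-special of degree $2g-1$. The idea is then to project once more: from $K_{p,1}(C, K_C(y))$ down to $K_{p-1,1}(C,K_C)$. However, the cleaner route is to invoke the contrapositive of Corollary \ref{cor: projection} directly on the first projection. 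Indeed, $K_C(y)$ is non\-special, so if $K_{p,1}(C,K_C(y))=0$ we would immediately conclude $K_{p+1,1}(C,K_C(x+y))=0$ from Corollary \ref{cor: projection} applied with the base point $x$. Thus the entire problem reduces to establishing $K_{p,1}(C,K_C(y))=0$.

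For this reduction step I would again apply Corollary \ref{cor: projection}, now with the point $y$ and the non\-special bundle $K_C(y)=\big(K_C(y)\big)$: writing $K_C(y)\big(-y\big) = K_C$, the corollary requires $K_C$ to be \emph{non}\-special, which fails. So the direct two-fold application is blocked, and the honest argument must instead pass through the projection \emph{exact sequence}
$$
0\rightarrow K_{p,1}(C,K_C(y),W_y)\longrightarrow K_{p,1}(C,K_C(y)) \xrightarrow{\ \mathrm{pr}_y\ } K_{p-1,1}(C,K_C),
$$
combined with the hypothesis $K_{p,1}(C,K_C)=0$ to control the source term $K_{p,1}(C,K_C(y),W_y)$ via the inclusion $W_y = H^0(C,K_C)\hookrightarrow H^0(C,K_C(y))$, whose quotient is $1$-dimensional. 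One shows $K_{p,1}(C,K_C(y),W_y)$ is governed by $K_{p,1}(C,K_C)$, which vanishes by hypothesis, while the image in $K_{p-1,1}(C,K_C)$ is killed by the same vanishing after a degree shift; the Proposition guarantees survival of nonzero classes under generic projection, giving the reverse implication needed to propagate the vanishing.

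The main obstacle I anticipate is precisely the speciality of $K_C$, which prevents a clean invocation of Corollary \ref{cor: projection} in the second step and forces one to work with the full projection exact sequence and with subspaces $W_x, W_y$ rather than complete linear systems. The careful bookkeeping of which Koszul groups are computed with respect to $H^0$ versus a hyperplane $W$, and tracking the $q=1$ identification $K_{p,1}(C,L)\cong K_{p,1}(C,L(-B))$ from Section \ref{sec: Koszul} through the two base points $x$ and $y$, is where the argument must be assembled with care. Once the source terms are identified with the vanishing group $K_{p,1}(C,K_C)$, the conclusion $K_{p+1,1}(C,K_C(x+y))=0$ follows formally.
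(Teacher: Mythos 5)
Your first step is the right one and coincides with the paper's: apply Corollary \ref{cor: projection} to $L=K_C(x+y)$ with center $x$, observe that $L(-x)=K_C(y)$ is nonspecial, and thereby reduce everything to the vanishing $K_{p,1}(C,K_C(y))=0$. The gap is in how you try to obtain that vanishing. The missing observation is elementary: $y$ is a \emph{base point} of $|K_C(y)|$, since by Riemann--Roch $h^0(C,K_C(y))=g=h^0(C,K_C)$, so that $H^0(C,K_C)=H^0(C,K_C(y))$. The $q=1$ identification recalled in Section \ref{sec: Koszul}, namely $K_{p,1}(C,L)\cong K_{p,1}(C,L(-B))$ for $B=\mathrm{Bs}|L|$, then gives $K_{p,1}(C,K_C(y))\cong K_{p,1}(C,K_C)=0$ at once, and the corollary follows. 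This one-line identification is the paper's entire proof; you mention it at the very end as ``bookkeeping,'' but you never actually deploy it.

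Instead, your second step routes through the projection exact sequence centered at $y$, and the claims made there are incorrect. First, the quotient $H^0(C,K_C(y))/W_y$ is not $1$-dimensional but zero --- and this equality is precisely the fact that would have closed the argument immediately. Second, you assert that the image of $\mathrm{pr}_y$ inside $K_{p-1,1}(C,K_C)$ is ``killed by the same vanishing after a degree shift''; but the hypothesis $K_{p,1}(C,K_C)=0$ gives no control whatsoever over $K_{p-1,1}(C,K_C)$: vanishing in weight one never propagates downward in $p$. Concretely, in the sharp case $p=g-\mathrm{Cliff}(C)-1$ (exactly the case relevant to Green's conjecture and to every application in this paper), Green--Lazarsfeld nonvanishing gives $K_{p-1,1}(C,K_C)=K_{g-\mathrm{Cliff}(C)-2,1}(C,K_C)\neq 0$, so the vanishing you need is false. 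Third, the Proposition on survival of syzygies applies to projection from a \emph{general} point, whereas $x$ and $y$ in the statement are arbitrary fixed points; and in any case it produces nonvanishing conclusions, which is the wrong direction for propagating a vanishing. As written, the second step therefore does not go through; replacing it by the base-point identification above repairs the proof and recovers the paper's argument.
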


The proof of Corollary \ref{cor: projection can} follows directly from the
Corollary \ref{cor: projection} for $L=K_C(x+y)$ coupled with isomorphisms
$K_{p,1}(C,K_C(y))\cong K_{p,1}(C,K_C)$. Geometrically, the image
of $C$ under the linear system $|K_C(x+y)|$ is a nodal canonical curve,
having the two points $x$ and $y$ identified, and the statement corresponds
to the projection map from the node.

By induction, from Corollary \ref{cor: projection can} and \ref{cor: projection} we obtain:

\begin{corollary}
\label{cor: canonical}
Let $C$ be a curve and $p\geq 1$  such that
$K_{p,1}(C,K_C)=0$. Then for any effective divisor
$E$ of degree $e$, we have $K_{p+e-1,1}(C,K_C(E))=0$.
\end{corollary}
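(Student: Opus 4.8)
The plan is to induct on the degree $e=\deg E$, peeling off one point of $E$ at each step and invoking the projection-of-syzygies machinery of Corollaries \ref{cor: projection} and \ref{cor: projection can}. Throughout I assume $e\geq 1$ (for $e=0$ the assertion would read $K_{p-1,1}(C,K_C)=0$, which is false in general).

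For the base case $e=1$, write $E=x$, so that the claim becomes $K_{p,1}(C,K_C(x))=0$. Here $\deg K_C(x)=2g-1$ and, by Riemann--Roch together with $h^1(K_C(x))=h^0(\OO_C(-x))=0$, we get $h^0(C,K_C(x))=g=h^0(C,K_C)$; hence $x$ is a base point of $K_C(x)$. By the base-point reduction recorded in Section \ref{sec: Koszul} we then have $M_{K_C(x)}\cong M_{K_C(x)(-x)}=M_{K_C}$, and therefore $K_{p,1}(C,K_C(x))\cong K_{p,1}(C,K_C)=0$. This is precisely the isomorphism already used in the proof of Corollary \ref{cor: projection can}.

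For the inductive step, suppose the assertion holds for all effective divisors of degree $e-1$, with $e\geq 2$, and let $\deg E=e$. Write $E=E'+x$ with $x$ a point and $E'$ effective of degree $e-1\geq 1$, and apply Corollary \ref{cor: projection} to $L:=K_C(E)$, so that $L(-x)=K_C(E')$. Two hypotheses must be verified. First, $L(-x)$ is nonspecial: indeed $\deg K_C(E')=2g-2+(e-1)>2g-2$, so $h^1(K_C(E'))=h^0(\OO_C(-E'))=0$ since $E'$ is effective of positive degree. Second, $K_{p+e-2,1}(C,L(-x))=0$: this is exactly the inductive hypothesis applied to $E'$, which gives $K_{p+(e-1)-1,1}(C,K_C(E'))=K_{p+e-2,1}(C,K_C(E'))=0$. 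Corollary \ref{cor: projection} (with its index ``$p$'' taken to be $p+e-2$) then yields $K_{p+e-1,1}(C,K_C(E))=0$, completing the induction.

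The only genuine subtlety, and the reason both corollaries are named, is the transition from the special bundle $K_C$ to the nonspecial bundles $K_C(E')$ with $\deg E'\geq 1$: Corollary \ref{cor: projection} cannot be applied at the very first point, since its nonspeciality hypothesis fails for $L(-x)=K_C$. This initial step is handled instead by the base-point isomorphism of the base case (equivalently, by the two-point jump of Corollary \ref{cor: projection can}); once a single point has been added, every subsequent bundle is nonspecial and the projection of Corollary \ref{cor: projection} applies repeatedly without obstruction. I expect no computational difficulty here, as the argument is pure bookkeeping of Koszul indices and degrees, with Serre duality supplying nonspeciality at each stage.
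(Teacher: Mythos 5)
Your proof is correct and takes essentially the same route as the paper, which obtains the corollary by induction from Corollaries \ref{cor: projection can} and \ref{cor: projection}: your inductive step is exactly the repeated application of Corollary \ref{cor: projection} to the nonspecial bundles $K_C(E')$, and your base case $e=1$ (the base-point isomorphism $K_{p,1}(C,K_C(x))\cong K_{p,1}(C,K_C)$) is precisely the ingredient the paper uses inside the proof of Corollary \ref{cor: projection can}. The only cosmetic difference is that you start the induction with a one-point jump rather than invoking the two-point Corollary \ref{cor: projection can} directly, which changes nothing of substance.
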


\subsection{Koszul vanishing}

Using a secant construction, Green and Lazarsfeld \cite{GL-nonvanishing} have shown that non-trivial geometry (in the forms of existence of special linear series) implies non-trivial syzygies. Precisely,
if $C$ is a curve of genus $g$ and $\mathrm{Cliff}(C)=c$, then
$
K_{g-c-2,1}(C,K_C)\ne 0,
$
or equivalently, by duality, $K_{c,2}(C,K_C)\ne 0$.
Green \cite{Green-JDG84} conjectured that this should be optimal and the converse should hold:

\begin{conjecture}
\label{conj: Green}
For any curve $C$ of genus $g$ and Clifford index
$c$, one has that
$$
K_{g-c-1,1}(C,K_C) = 0,
$$
equivalently, $K_{p,2}(C,K_C)=0$ for all $p<c$.
\end{conjecture}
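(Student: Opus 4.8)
The plan is to establish the single cohomological vanishing that is genuinely at stake. Since the Green--Lazarsfeld non-vanishing theorem recalled above already gives $K_{g-c-2,1}(C,K_C)\neq 0$ (equivalently $K_{c,2}(C,K_C)\neq 0$) unconditionally, and the two formulations of the conjecture are linked by Koszul duality, everything reduces to proving the top vanishing $K_{g-c-1,1}(C,K_C)=0$, equivalently $K_{p,2}(C,K_C)=0$ for all $p<c$. So I would focus entirely on this extremal group; the lower syzygy groups then follow formally.

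First I would translate the statement into the kernel-bundle language of Section~\ref{sec: Koszul}. Writing $M:=M_{K_C}$ for the kernel of the evaluation $H^0(C,K_C)\otimes\OO_C\to K_C$, the group $K_{g-c-1,1}(C,K_C)$ is the cokernel of the natural map $\wedge^{g-c}H^0(C,K_C)\to H^0(C,\wedge^{g-c-1}M\otimes K_C)$, so the vanishing amounts to the surjectivity of this evaluation map, and dually to the vanishing of a suitable $H^1$ of a twist of an exterior power of $M$. This reformulation is what makes the statement accessible to geometry, since the sheaf $\wedge^{p}M$ is controlled by the extrinsic geometry of the canonical embedding $\phi_{K_C}\colon C\to\PP^{g-1}$ and its secant varieties.

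Next I would pass to a well-chosen special curve in the relevant moduli family by semicontinuity, and deploy the projection-of-syzygies machinery of Corollaries~\ref{cor: projection}--\ref{cor: canonical} to trade the special bundle $K_C$ for a nonspecial line bundle on a (possibly nodal or degenerate) curve, where Koszul vanishing is far more tractable. In the range governed by gonality this is exactly where the Brill--Noether sufficient condition enters: if $C$ is $d$-gonal and satisfies the linear growth condition (\ref{lgc}), the $W^1$ loci grow slowly enough to force the desired vanishing, and one is reduced to a pure Brill--Noether estimate. The whole difficulty is thus concentrated in curves where (\ref{lgc}) fails.

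The hard part, and the reason the conjecture is genuinely deep, is producing the top vanishing at the special fibre when no such finiteness is available, for instance on curves carrying infinitely many minimal pencils. Here the only known route is Voisin's: realize the curve as a hyperplane section of a $K3$ surface and compute the relevant $H^1(\wedge^{g-c-1}M\otimes K_C)$ through the Lazarsfeld--Mukai bundle and the geometry of the Hilbert scheme of points on the surface, where the exterior powers of the kernel bundle acquire an explicit resolution. Controlling these higher cohomology groups of wedge powers of $M$ is the crux; everything else is formal reduction. I therefore expect the decisive step to be the construction of an ambient surface (or a suitable limit) on which $\wedge^{g-c-1}M\otimes K_C$ has vanishing higher cohomology, together with a transversality argument ensuring that this vanishing spreads out to the general curve of the family under study.
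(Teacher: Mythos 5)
There is a genuine gap, and it is fundamental: the statement you are trying to prove is Conjecture \ref{conj: Green} --- Green's Conjecture itself --- which the paper does not prove and which remains open for arbitrary curves. The paper only establishes it for particular classes of curves (general covers of plane curves, general triple covers of elliptic curves, general \'etale double covers, sections of Nikulin surfaces), and the reason it cannot do more is precisely the obstruction your proposal runs into. Your strategy rests on semicontinuity and specialization: ``pass to a well-chosen special curve\ldots by semicontinuity'' and then ``ensure that this vanishing spreads out to the general curve of the family.'' But Koszul vanishing is an open condition, so this logic only ever proves statements about the \emph{general} member of an irreducible family. The conjecture quantifies over \emph{every} curve $C$, with a bound depending on the Clifford index of that individual curve; the curves left untouched by any such argument are exactly the special ones (and the Clifford index itself jumps under specialization, so the target vanishing $K_{g-c-1,1}(C,K_C)=0$ changes as you degenerate). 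You cannot close this gap by a ``transversality argument'': openness propagates vanishing away from the special fibre, never onto the curves that lie in the closed, non-generic strata.

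The second decisive step of your plan fails for dimension reasons. You write that ``the only known route is Voisin's: realize the curve as a hyperplane section of a $K3$ surface.'' Most curves admit no such realization: the locus $\K_g\subset\cM_g$ of curves lying on $K3$ surfaces has dimension $19+g$, which is far smaller than $\dim \cM_g=3g-3$ once $g$ is large. The Lazarsfeld--Mukai/Hilbert-scheme machinery, and its extension in Theorem \ref{thm: AF}, applies only to curves that actually lie on a $K3$ surface (or are carefully constructed limits of such, as in the paper's Theorems \ref{doubleplane}, \ref{thm: triple} and \ref{nik1}, where irreducibility of the relevant Hurwitz-type moduli spaces makes ``general cover'' meaningful). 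So what you describe as ``formal reduction'' --- getting from an arbitrary curve of Clifford index $c$ to a curve on an ambient surface with the required cohomological vanishing --- is not formal at all; it is the open problem. A correct account of the state of the art is: Green--Lazarsfeld non-vanishing gives one implication unconditionally, condition (\ref{lgc}) gives a Brill--Noether sufficient criterion, and the theorems of this paper verify that criterion (or circumvent its failure) for specific families of general covers; no known technique reaches all curves.
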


\medskip

In the case of a nonspecial line bundle  $L$
on a curve $C$ of gonality $d$, \cite{GL-nonvanishing} gives us the non-vanishing of
$K_{h^0(L)-d-1,1}(C,L)\neq 0$.
In the same spirit, one may ask whether this result  is optimal.
It was conjectured in \cite{GL-normality} that
this should be the case for bundles of large degree.

\begin{conjecture}
\label{con: gonality}
For any curve $C$ of gonality $d$ there exists
a nonspecial very ample line bundle $L$ such that
$K_{h^0(L)-d,1}(C,L)=0$.
\end{conjecture}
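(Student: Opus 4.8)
The plan is to exhibit the required line bundle in the simplest possible shape, $L=K_C(E)$ for a general effective divisor $E$ of degree $e$. Such an $L$ is automatically nonspecial, since $h^1(K_C(E))=h^0(\OO_C(-E))=0$, it is very ample as soon as $e\geq 3$, and $h^0(L)=g-1+e$, so that the target statement reads $K_{g-1+e-d,1}(C,K_C(E))=0$ with $d=\gon(C)$. The natural first move is to reduce this to a purely canonical Koszul vanishing on $C$ through the projection-of-syzygies machinery of Section \ref{sec: Koszul}, regarding $L=K_C(E)$ as $K_C$ twisted by the $e$ points of $E$ and peeling them off one at a time.

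Running Corollary \ref{cor: canonical} in this way, a canonical vanishing $K_{p,1}(C,K_C)=0$ propagates to $K_{p+e-1,1}(C,K_C(E))=0$; matching $p+e-1$ with the target index $g-1+e-d$ forces $p=g-d$. Thus a pure projection argument would require $K_{g-d,1}(C,K_C)=0$. By the Green--Lazarsfeld non-vanishing, however, $K_{g-\Cliff(C)-2,1}(C,K_C)\neq 0$, and since $g-d$ lies at or below $g-\Cliff(C)-2$ (whether $\gon(C)=\Cliff(C)+2$ or $\gon(C)=\Cliff(C)+3$), the group $K_{g-d,1}(C,K_C)$ sits inside the nonvanishing part of the linear strand. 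Hence the gonality statement is genuinely one homological step sharper than any canonical Koszul vanishing, and cannot be obtained from Green's Conjecture by projection alone; the missing step is exactly the one occupied by the minimal pencils of $C$.

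To close this step I would inject the geometry of those pencils through condition (\ref{lgc}), i.e. the sharp bound $\dim W^1_{d+n}(C)\leq n$. Using the kernel-bundle description from Section \ref{sec: Koszul}, $K_{p,1}(C,L)\cong\mathrm{Coker}\{\wedge^{p+1}H^0(C,L)\to H^0(C,\wedge^{p}M_L\otimes L)\}$ with $p=g-1+e-d$, I would organise the obstruction to surjectivity so that its only potential contributions come from degree-$d$ pencils on $C$, and then invoke the linear growth hypothesis to bound the dimension of that family tightly enough to force the relevant cohomology of a wedge power of $M_L$ to vanish once $\deg L$ is large, following the first author's method \cite{A-MRL}. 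An alternative, bypassing the canonical reduction entirely, is to take $\deg L$ very large from the start and prove a direct asymptotic vanishing $H^1(C,\wedge^{a}M_L\otimes B)=0$ for suitable auxiliary bundles $B$, in the spirit of Ein and Lazarsfeld, again with the residual term pinned down by the pencils of degree $d$.

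The main obstacle is precisely this last sharp vanishing. Everything up to index $g-d+e$ is formal and flows from the projection exact sequences of Section \ref{sec: Koszul}; the final step, matching the Green--Lazarsfeld non-vanishing, is where the Brill--Noether geometry of $C$ must enter. I therefore expect the technical heart of any proof to be the cohomological estimate on a wedge power of the kernel bundle $M_L$ in which the dimension bound (\ref{lgc}) on $W^1_{d+n}(C)$ is indispensable.
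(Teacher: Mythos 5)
The statement you set out to prove is recorded in the paper as Conjecture \ref{con: gonality} --- the Green--Lazarsfeld gonality conjecture --- and the paper contains \emph{no proof} of it: it is quoted as background, and it was an open problem at the time of writing. The paper's only positive results in its direction are conditional: by \cite{A-MRL}, the linear growth condition (\ref{lgc}) implies both Green's conjecture and the gonality conjecture, and Theorem \ref{thm: AF} settles the gonality conjecture for curves of Clifford dimension one lying on $K3$ surfaces, general in their linear systems. So there is no paper proof to compare against; the question is whether your proposal stands on its own, and it does not.

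There are two gaps, one of them structural. First, the proposal is avowedly incomplete: the decisive step --- the sharp cohomological vanishing for $\wedge^{p}M_L\otimes L$ at the critical index $p=h^0(L)-d$ --- is exactly what you leave open. The part you do carry out is correct: with $L=K_C(E)$ one has $h^0(L)=g-1+e$, nonspecialty and very ampleness for $e\geq 3$, and the index count showing that Corollary \ref{cor: canonical} applied to any canonical-bundle vanishing reaches at best $K_{h^0(L)-d+1,1}(C,L)=0$, one step short of the target, because $K_{g-d,1}(C,K_C)\neq 0$ by \cite{GL-nonvanishing} (apply the nonvanishing construction to a minimal pencil $A\in W^1_d(C)$ and $K_C\otimes A^{\vee}$). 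But this is the standard reduction explaining why the gonality conjecture is strictly harder than Green's conjecture; it is not progress toward closing the gap. Second, and fatally for the stated generality, your plan to finish by invoking (\ref{lgc}) cannot prove the conjecture ``for any curve $C$ of gonality $d$'': condition (\ref{lgc}) is only a sufficient hypothesis, and it fails for large classes of curves --- indeed, the central theme of this very paper is such curves, namely those carrying infinitely many minimal pencils (double and fourfold covers of plane curves, triple covers of elliptic curves, double covers of curves of Clifford dimension $3$). Completing your kernel-bundle estimate would therefore only reprove the theorem of \cite{A-MRL}, i.e.\ the gonality conjecture \emph{under hypothesis} (\ref{lgc}); an argument whose sole geometric input is the bound $\dim W^1_{d+n}(C)\leq n$ can never apply to, say, a general double cover of a smooth plane quintic, which nonetheless falls under the conjecture. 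Any genuine attack on the full statement must contain a mechanism for handling infinite families of minimal pencils, and your proposal has none.
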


\subsection{Curves on $K3$ surfaces}
It was known since the eighties that the locus
$$\mathcal{K}_g:=\{[C]\in \cM_g: C\mbox{ lies on a } K3 \mbox{ surface}\}$$
does not lie in any proper Brill-Noether stratum in $\cM_g$.
Most notably, curves $[C]\in \mathcal{K}_g$ lying on $K3$ surfaces $S$ with $\mathrm{Pic}(S)=\mathbb Z\cdot C$ satisfy the Brill-Noether-Petri theorem, see \cite{Lazarsfeld-K3}. This
provides a very elegant solution to the Petri conjecture, and remains to this day, the only explicit example of a \emph{smooth} Brill-Noether general curve of unbounded genus.

Green's hyperplane
section theorem \cite{Green-JDG84} asserts that
the Koszul cohomology of any $K3$ surface is isomorphic
to that of any hyperplane section, that is, 
$$K_{p, q}(S, \OO_S(C))\cong K_{p, q}(C, K_C).$$
Voisin has used this fact to find a solution
to Green's  conjecture for generic curves, see \cite{Voisin-even}, \cite{Voisin-odd}:

\begin{theorem}
Let $C$ be a smooth curve lying on a $K3$ surface $S$ with $\mathrm{Pic}(S)=\mathbb Z\cdot C$. Then
$C$ satisfies Green's conjecture.
\end{theorem}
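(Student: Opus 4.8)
The plan is to transfer Green's conjecture for $C$ to a purely surface-theoretic statement about the polarized $K3$ surface $(S,L)$ with $L:=\OO_S(C)$, and then to settle that statement on the Hilbert scheme of points of $S$, where the hypothesis $\Pic(S)=\ZZ\cdot C$ can finally be used. First I would pin down the numerics: by Lazarsfeld's theorem \cite{Lazarsfeld-K3} the hypothesis $\Pic(S)=\ZZ\cdot C$ forces $C$ to be Brill--Noether--Petri general, so its Clifford index takes the generic value $c=\Cliff(C)=\lfloor (g-1)/2\rfloor$. The Green--Lazarsfeld nonvanishing theorem already gives $K_{c,2}(C,K_C)\neq 0$, so only the vanishing half of Conjecture \ref{conj: Green} remains: I must show $K_{p,2}(C,K_C)=0$ for every $p<c$, which by Green's hyperplane section theorem is the same as proving $K_{p,2}(S,L)=0$ for $p<c$.

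Next I would reformulate this surface Koszul group through the kernel bundle $M_L=\mathrm{Ker}\{H^0(S,L)\otimes\OO_S\to L\}$. Twisting the tautological sequence $0\to\wedge^{p+1}M_L\to\wedge^{p+1}H^0(S,L)\otimes\OO_S\to\wedge^pM_L\otimes L\to 0$ by $L$ and using the Kodaira vanishing $H^1(S,L)=0$, the cokernel description recalled in the excerpt yields $K_{p,2}(S,L)\cong H^1(S,\wedge^{p+1}M_L\otimes L)$. The whole problem thus reduces to the vanishing $H^1(S,\wedge^j M_L\otimes L)=0$ for all $j\le c$, the critical instance being $j=c$.

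The decisive step is to evaluate this $H^1$ on the Hilbert scheme $S^{[p+2]}$ of length-$(p+2)$ subschemes of $S$. Writing $\Xi_n\subset S^{[n]}\times S$ for the universal subscheme and $L^{[n]}=p_{1*}(p_2^*L\otimes\OO_{\Xi_n})$ for the rank-$n$ tautological bundle, one identifies $H^0(S,\wedge^pM_L\otimes L^2)$, together with the Koszul differential, with sections of a determinantal twist of a tautological bundle on $S^{[p+2]}$ and a natural restriction map. Under this dictionary a nonzero class in $K_{p,2}(S,L)$ is realized geometrically by a length-$(p+2)$ cluster that fails to impose independent conditions on $|L|$, equivalently by a $(p+2)$-secant $\PP^p$ to a projective model of $S$. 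The hard part, and the main obstacle, will be the vanishing on $S^{[p+2]}$ itself: I would stratify the Hilbert scheme by the geometric type of the subscheme and show that any nonzero cohomology class must be supported on clusters lying on a curve carrying a special pencil, or on a proper sub-line-bundle of $L$ --- configurations that $\Pic(S)=\ZZ\cdot C$ precisely excludes. Carrying this out needs a delicate filtration of the tautological bundle together with Serre duality and Kodaira-type vanishing on the smooth variety $S^{[p+2]}$, and it is technically different for the two parities of $g$, handled separately in \cite{Voisin-even} and \cite{Voisin-odd}. The conceptual upshot is that the extra canonical syzygies predicted by non-generic geometry correspond to secant configurations that simply do not occur on a $K3$ surface of minimal Picard rank.
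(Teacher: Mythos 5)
A point of context first: the paper does not prove this statement at all. It is Voisin's theorem, quoted from \cite{Voisin-even}, \cite{Voisin-odd} as background for the results the paper actually proves (via the extension in \cite{AF-Compositio}). So the only meaningful benchmark is Voisin's own proof, and against that benchmark your proposal has a genuine gap.

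Your preliminary reductions are correct and do form the standard skeleton of the argument: Lazarsfeld's theorem \cite{Lazarsfeld-K3} gives Brill--Noether--Petri generality, hence $\Cliff(C)=\lfloor (g-1)/2\rfloor$; the Green--Lazarsfeld nonvanishing theorem settles one implication; Green's hyperplane section theorem replaces $K_{p,2}(C,K_C)$ by $K_{p,2}(S,L)$; and the identification $K_{p,2}(S,L)\cong H^1(S,\wedge^{p+1}M_L\otimes L)$, valid because $H^1(S,L)=0$, is correct. The problem is that everything after this point is a description of what a proof should accomplish, not a proof. The sentence in which you ``stratify the Hilbert scheme by geometric type and show that any nonzero class is supported on clusters lying on a curve with a special pencil, configurations excluded by $\Pic(S)=\ZZ\cdot C$'' is exactly the entire content of the theorem, and no argument is given for it; worse, it is explicitly deferred to \cite{Voisin-even} and \cite{Voisin-odd}, i.e.\ to the very papers whose main theorem you are asked to prove, which makes the attempt circular. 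In addition, the mechanism you sketch does not match how Voisin's argument actually runs: she does not exclude strata of $S^{[p+2]}$ carrying cohomology. She reformulates the Koszul vanishing as the surjectivity of a restriction map for sections of the determinant of a tautological bundle along the incidence subscheme inside $S^{[n]}\times S$ (on its curvilinear locus, for suitable $n$), and proves that surjectivity by explicit cohomological computations; the odd-genus case requires a further degeneration to nodal hyperplane sections and a delicate reduction to the even-genus statement. The hypothesis $\Pic(S)=\ZZ\cdot C$ enters to guarantee that all members of $|L|$ are irreducible and to control the relevant Brill--Noether facts, not through a secant stratification. In short, your proposal is an accurate table of contents for the known proof, with the body of the book missing.
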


This result has been extended in \cite{AF-Compositio} to cover the case of $K3$ surfaces with arbitrary Picard lattice, in particular curves with arbitrary gonality:

\begin{theorem}
\label{thm: AF}
Green's conjecture is valid for any smooth curve $[C]\in \K_g$
of genus $g$ and gonality $d\le [\frac{g}{2}]+1$.
The gonality conjecture is valid for smooth curves of Clifford
dimension one on a $K3$ surface, general in their linear systems.
\end{theorem}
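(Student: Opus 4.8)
The plan is to bypass the Koszul cohomology $K_{g-c-1,1}(C,K_C)$ directly and instead verify Aprodu's criterion. By the discussion around (\ref{lgc}), once $C$ is exhibited as a $d$-gonal curve with $d\leq[g/2]+1$ satisfying the linear growth condition, Green's Conjecture follows; so the whole problem is transferred from syzygies to the Brill-Noether geometry of the minimal pencils on $C$. I would begin with the standard reductions: the Green-Lazarsfeld non-vanishing $K_{g-c-2,1}(C,K_C)\neq 0$ shows that only the vanishing $K_{g-c-1,1}(C,K_C)=0$ remains, and since $C\subset S$ lies on a $K3$ surface with $d\leq[g/2]+1$, I reduce to the case in which the gonality computes the Clifford index, $c=d-2$, the curves of higher Clifford dimension on $K3$ surfaces being classified and treatable separately. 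It then suffices to prove the linear growth bound $\dim W^1_{d+n}(C)\leq n$ for all $0\leq n\leq g-2d+2$.

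The technical core is to control the loci $W^1_{d+n}(C)$ through Lazarsfeld-Mukai bundles. To a base-point-free $A\in W^1_k(C)$, with $k=d+n$, one attaches the kernel of the evaluation map,
$$0\longrightarrow F_{C,A}\longrightarrow H^0(C,A)\otimes\OO_S\longrightarrow \iota_*A\longrightarrow 0,$$
whose dual $E_{C,A}$ is globally generated with $c_1(E_{C,A})=[C]$ and $c_2(E_{C,A})=k$. A positive-dimensional family of such pencils forces the bundles $E_{C,A}$ into a bounded family on $S$, and by the Donagi-Morrison and Lazarsfeld theory a pencil of near-minimal degree on a $K3$ curve is, away from exceptional configurations, the restriction of a line bundle class on $S$ or is induced by an elliptic pencil on $S$. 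Translating these structural constraints, together with the numerics $c_2=k$, into a dimension estimate on the parameter space yields exactly $\dim W^1_{d+n}(C)\leq n$, i.e.\ the linear growth condition (\ref{lgc}), whence Conjecture \ref{conj: Green} for $C$.

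For the second assertion, Conjecture \ref{con: gonality} in Clifford dimension one with $C$ general in its linear system follows from the same circle of ideas: Aprodu's theorem produces, under (\ref{lgc}), a nonspecial very ample $L$ with $K_{h^0(L)-d,1}(C,L)=0$, and generality of $C$ keeps its minimal pencils confined to a bounded family so that (\ref{lgc}) indeed holds. I expect the genuine obstacle to be concentrated in the second paragraph, namely promoting Voisin's vanishing—available only when $\Pic(S)=\mathbb Z\cdot C$—to an arbitrary Picard lattice. The danger is that additional classes in $\Pic(S)$ might sweep out an unexpectedly large family of minimal pencils on $C$ and destroy linear growth; ruling this out requires the full Lazarsfeld-Mukai analysis, controlling precisely when $E_{C,A}$ becomes decomposable or when $A$ descends to a class on $S$, and it is here that the lattice-theoretic input must replace the rank-one hypothesis.
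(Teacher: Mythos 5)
There is a genuine gap, and it is located exactly where you predicted trouble but hoped to overcome it. Your central step is to prove the linear growth bound $\dim W^1_{d+n}(C)\le n$ for the \emph{given} curve $C\subset S$ by a Lazarsfeld--Mukai bundle analysis. No such argument can exist, because the bound is simply false for some curves covered by Theorem~\ref{thm: AF}: condition~(\ref{lgc}) genuinely fails for special members of linear systems on $K3$ surfaces. This very paper exhibits the counterexamples in Section~\ref{sec: 6-gonal}: if $S\to\PP^2$ is a double plane branched along a sextic and $C\subset S$ is the preimage of a general plane curve $\Gamma$ of degree $d$, then $C$ has genus $d^2+1$ and gonality $2d-2\le[\frac{g}{2}]+1$, but it carries the one-dimensional family of minimal pencils $\mathfrak g^1_{2d-2}$ pulled back from the projections of $\Gamma$, so $\dim W^1_{2d-2}(C)\geq 1$ and (\ref{lgc}) fails, while Theorem~\ref{thm: AF} still applies to $C$. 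The ``danger'' you describe in your final paragraph --- extra classes in $\Pic(S)$ sweeping out an unexpectedly large family of minimal pencils --- is thus a real phenomenon that cannot be ruled out; the paper even emphasizes that these linear systems have jumping Brill--Noether loci.

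Note that the present paper does not reprove Theorem~\ref{thm: AF}; it quotes it from \cite{AF-Compositio}, and the proof there circumvents the obstruction above by a transfer step that is entirely missing from your proposal. By Green's hyperplane section theorem \cite{Green-JDG84}, the Koszul cohomology of a canonical curve on a $K3$ surface is an invariant of its linear system: $K_{p,2}(C,K_C)\cong K_{p,2}(S,\OO_S(C))\cong K_{p,2}(C',K_{C'})$ for any smooth $C'\in|C|$. Moreover, by \cite{GL3} the Clifford index (and, in Clifford dimension one, the gonality) is constant as $C'$ moves in $|C|$. Hence it suffices to verify (\ref{lgc}) for a \emph{general} member $C'\in|C|$, and it is for that general member that the Lazarsfeld--Mukai parameter count --- boundedness of the family of bundles $E_{C',A}$, the structure theory for pencils of near-minimal degree, and the resulting dimension estimates --- actually yields $\dim W^1_{d+n}(C')\le n$, after which \cite{A-MRL} applies. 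This also explains the phrasing of the second assertion of the theorem, where ``general in their linear systems'' is required precisely because (\ref{lgc}), hence the gonality conjecture via \cite{A-MRL}, is only accessible generically. Your outline of the Lazarsfeld--Mukai dimension count is the right technical core, but without the reduction to the general member of $|C|$ the strategy fails at the first counterexample.
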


\medskip

It is natural to ask whether in a linear system whose smooth
members are of Clifford dimension one the condition
(\ref{lgc}) is preserved. The answer in NO, as we shall see
in section \ref{sec: 6-gonal}.

\section{Syzygy conjectures for general \'etale double covers}
In this section we prove Theorem \ref{greencov} by degeneration. We begin by observing that if $g=2i$ with $i\in \mathbb Z_{>0}$ and
 $f:\widetilde{C}\rightarrow C$ is an \'etale double cover
of the genus-$g$ curve $C$, with $f_*\OO_{\widetilde{C}}=\OO_C\oplus \eta$, then $\widetilde{C}$ cannot possibly
 have maximal Clifford index (gonality). The difference variety $C_i-C_i\subset \mbox{Pic}^0(C)$ covers the Jacobian  $\mbox{Pic}^0(C)$ and there
 exist effective divisors $D, E\in C_i$ such that $\eta=\OO_C(D-E)$. We set $A:=f^*(\OO_C(E))\in \mbox{Pic}^g(\widetilde{C})$ and note that
$$h^0(\widetilde{C}, A)=h^0(C, f_*f^*(\OO_C(E))=h^0(C, \OO_C(E))+h^0(C, \OO_C(D))\geq 2, $$
that is, $A\in W^1_g(C)$. This shows that the image of the map $$\chi:\rr_g\rightarrow \mm_{2g-1}, \ \ \ \chi\bigl([\widetilde{C}\stackrel{f}
\rightarrow C]\bigr):= [\widetilde{C}]$$ is contained in the Hurwitz divisor $\mm_{2g-1, g}^1\subset \mm_{2g-1}$ of curves with
a $\mathfrak g^1_g$. For odd $g$ there is no obvious reason why $\widetilde{C}$ should have non-maximal gonality and  indeed,
we shall show that $\mbox{gon}(\widetilde{C})=g+1$ in this case.
\vskip 3pt

To prove Theorem \ref{greencov} we use the following degeneration. Fix a general pointed curve $[C, p]\in \cM_{g-1, 1}$ as well as an elliptic curve $[E, p]\in \cM_{1, 1}$. We fix a non-trivial point $\eta_E\in \mbox{Pic}^0(E)[2]$, inducing an \'etale double cover $f_E:\widetilde{E}\rightarrow E$, and set $\{x, y\}:=f_E^{-1}(p)$. The points $x, y\in \widetilde{E}$ satisfy the linear equivalence $2x\equiv 2y$. We choose two identical copies $(C_1, p_1)$ and $(C_2, p_2)$ of $(C, p)$ and consider the stable curve of genus $2g-1$
$$X_g:=C_1\cup E\cup C_2/p_1\sim x, p_2\sim y,$$
admitting an admissible double cover $f:X_g \rightarrow C\cup_p E$, which can be viewed as a point in the boundary divisor $\pi^*(\Delta_1)\subset \rr_g$. Note that $f$ maps both copies $(C_i, p_i)$ isomorphically onto $(C, p)$.

\begin{figure}[htb!]
\centering%
\includegraphics[width=7.5cm, height=4.5cm]{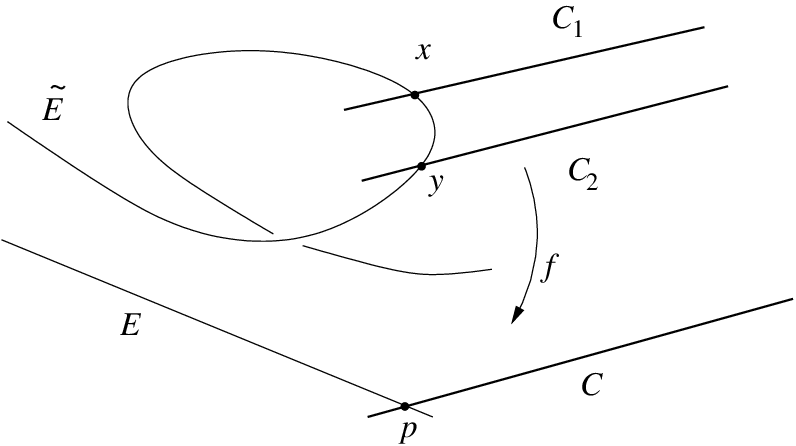}
\caption{The curve $X_g$.}
\label{fig: Xg}
\end{figure}

Theorem \ref{greencov} follows from the following computation coupled with an application of \cite{A-MRL}. Throughout the following proof we use the notation of \cite{EH} and
assume some familiarity with the theory of limit linear series. In particular, we recall that if $l\in G^r_d(C)$ is a linear series on a smooth curve $C$ of genus $g$ and $p\in C$, then one defines the \emph{adjusted Brill-Noether number} $\rho(l):=\rho(g, r, d)-w^l(p)$, where $w^l(p)$ is the \emph{weight} of the point $p$ with respect to $l$.

\begin{proposition} Let $[X_g\stackrel{f}\rightarrow C\cup E]\in \rr_{g}$ be the cover constructed above.\hfill

(i) If $g$ is odd then $\mathrm{gon}(X_g)=g+1$, that is, $[X_g]\notin \mm_{2g-1, g}^1$.

(ii) If $g$ is even then $\mathrm{gon}(X_g)=g$.
\end{proposition}
\begin{proof}
Suppose first that $X_g$ possesses a limit linear series $l\in \overline{G}^1_g(X_g)$ and
denote by $l_{C_1}, l_{C_2}$ and $l_{\widetilde{E}}$ respectively, its aspects on the
components of $X_g$.
From the additivity of the adjusted Brill-Noether number (obtained
by subtracting the ramification indices from the classical Brill-Noether number)
we obtain that
\begin{equation}\label{ineq1}
-1=\rho(2g-1, 1, g)\geq \rho(l_{C_1}, p_1)+\rho(l_{C_2}, p_2)+\rho(l_{\widetilde{E}}, x, y).
\end{equation}

Furthermore $\rho(l_{C_i}, p_i)\geq 0$,  because $[C_i, p_i]\in \cM_{g-1, 1}$ is general
and we apply \cite{EH} Theorem 1.1.
Using that the two points are generic,
it is easy to prove that $\rho(l_{\widetilde{E}}, x, y)\geq -1$. This shows that one has equality in (\ref{ineq1}), that is, $l$ is a \emph{refined limit} $\mathfrak g^1_g$ and moreover $\rho(l_{C_i}, p_i)=0$ for $i=1, 2$, and $\rho(l_{\widetilde{E}}, x, y)=-1$. We denote by $(a_0, a_1)$ (respectively $(b_0, b_1)$) the vanishing sequence of $l_{\widetilde{C}}$ at the point $x$ (respectively $y$). From the compatibility of vanishing sequences at the nodes $x$ and $y$, we find that $a_0+a_1=g$ and $b_0+b_1=g$ respectively.
On the other hand $l_{\widetilde{E}}$ possesses a section which vanishes at least with order $a_0$ at $x$ as well as with order $b_1$ at $y$ (respectively a section which vanishes at least with order $a_1$ at $x$ and order $b_0$ at $y$). Therefore $a_0+b_1\leq g$ and $a_1+b_0\leq g$. All in all, since $\rho(l_{\widetilde{E}}, x, y)=-1$, this implies that $a_0=b_0$ and $a_1=b_1=g-a_0$, and the following linear equivalence on $\widetilde{E}$ must hold:
$$a_0\cdot x+(g-a_0)\cdot y\equiv (g-a_0)\cdot x+a_0\cdot y.$$
Since $x-y\in \mathrm{Pic}^0(\widetilde{E})[2]$, we obtain that $g-2a_0 \equiv \mbox{ mod } 2$. When $g$ is odd this yields a
contradiction. On the other hand when $g$ is even, this argument shows that $\mbox{gon}(X_g)=g$, in the sense that $X_g$ carries no
limit linear series $\mathfrak g^1_{g-1}$ and there are a finite number of $\mathfrak g^1_g$'s corresponding to the unique choice of
an integer $0\leq a\leq \frac{g}{2}$, a unique $l_{\widetilde{E}}\in G^1_g(\widetilde{E})$  with vanishing sequence $(a_0, g-a_0)$ at
both $x$ and $y$, and to a finite number of $l_{C_i}\in G^1_g(C_i)$ with vanishing sequence $(a_0, g-a_0)$ at $p_i\in C_i$ for $i=1, 2$.
\end{proof}

\section{Syzygies of sections of Nikulin surfaces}
\label{sec: etale}
In this section we study syzygies of \'etale double covers lying on Nikulin $K3$ surfaces. The moduli space $\F_g^{\mathfrak{N}}$ of Nikulin surfaces of genus $g$ has been studied in \cite{vGS} and \cite{FV} which serve as a general reference. Let us recall a few definitions. A \emph{Nikulin involution} on a smooth $K3$ surface $Y$ is a symplectic involution $\iota \in \mbox{Aut}(Y)$. A Nikulin involution has  $8$ fixed points \cite{Ni}. The quotient
$\bar{Y}:=Y/\langle \iota\rangle$ has $8$ singularities of type $A_1$. We denote by $\sigma:\tilde{S}\rightarrow Y$ the blow-up of the $8$ fixed points, by $E_1, \ldots, E_8\subset \tilde{S}$ the exceptional divisors, and finally by $\tilde{\iota}\in \mbox{Aut}(\tilde{S})$ the automorphism induced by $\iota$. Then $S:=\tilde{S}/\langle \tilde{\iota}\rangle$ is a smooth $K3$ surface. If $f:\tilde{S}\rightarrow S$ is the projection, then $N_i:=f(E_i)$ are $(-2)$-curves on $S$. The branch divisor of $f$ is equal to $N:=\sum_{i=1}^8 N_i$.
We have the following diagram that shall be used for the rest of this section:
\begin{equation}\label{diagram}
\begin{CD}
{\tilde{S}} @>{\sigma}>> {Y} \\
@V{f}VV @V{}VV \\
{S} @>{}>> {\bar{Y}} \\
\end{CD}
\end{equation}
As usual, $H^2(Y, \mathbb Z)=U^3\oplus E_8(-1)\oplus E_8(-1)$ is the unique even unimodular lattice of signature $(3, 19)$, where $U$ is the rank $2$ hyperbolic lattice and $E_8$ is the unique even, negative-definite unimodular lattice of rank $8$.
As explained in \cite{vGS}, the action of the Nikulin involution $\iota$ on the group $H^2(Y, \mathbb Z)$ is given by
$$\iota^*(u, x, y)=(u, y, x),$$ where
$u\in U$ and $x, y\in E_8(-1)$. We identify the orthogonal complement
$$\bigl(H^2(Y, \mathbb Z)^{\iota}\bigr)^\bot=\{(0, y, -y):y\in E_8(-1)\}=E_8(-2).$$
Since $\iota^*(x)=-x$ for $x\in \bigl(H^2(Y, \mathbb Z)^{\iota}\bigr)^\bot$ whereas $\iota^*(\omega)=\omega$ for $\omega\in H^{2, 0}(Y)$, it follows that $x\cdot \omega=0$, therefore $E_8(-2)\subset \mathrm{Pic}(Y)$. This shows that the Picard number of $Y$ is at least $9$.
\vskip 3pt
By construction, the class $\OO_S(N_1+\cdots+N_8)$ is even and we consider the class $e\in \mathrm{Pic}(S)$  such that $e^{\otimes 2}=\OO_S(N_1+\cdots+N_8)$.

\begin{definition} The \emph{Nikulin lattice} is an even lattice $\mathfrak{N}$ of rank $8$ generated by elements $\{\mathfrak{n}_i\}_{i=1}^8$ and $\mathfrak{e}:=\frac{1}{2}\sum_{i=1}^8 \mathfrak{n}_i$, with the bilinear form induced by $\mathfrak{n}_i^2=-2$ for $i=1, \ldots, 8$ and
$\mathfrak{n}_i\cdot \mathfrak{n}_j=0$ for $i\neq j$.
\end{definition}
Note that $\mathfrak N$ is the minimal primitive sublattice of $H^2(S, \mathbb Z)$ containing the classes $N_1, \ldots, N_8$ and $e$.
We fix $g\geq 2$ and consider the lattice $$\Lambda_g:=\mathbb Z\cdot \mathfrak c\oplus \mathfrak{N},$$
 where $\mathfrak c\cdot \mathfrak c=2g-2$. A Nikulin surface of genus $g$ is a $K3$ surface $S$ together with a primitive embedding of lattices $j:\Lambda_g\hookrightarrow \mathrm{Pic}(S)$ such that $C:=j(\mathfrak c)$ is a numerically effective class. The moduli space $\F_g^{\mathfrak{N}}$ of Nikulin surfaces of genus $g$ is an irreducible $11$-dimensional variety. Its very general point corresponds to a Nikulin surface with $\mathrm{Pic}(S)=\Lambda_g$.

\vskip 3pt
Let $f: \widetilde{S} \rightarrow  S$ be a Nikulin surface together with a smooth curve $C\subset S$ of genus $g$ such that $C\cdot N=0$.
If $\widetilde C := f^{-1}(C)$, then
$$f_C:=f_{|\widetilde C}: \tilde C \rightarrow C
$$
is an \'etale double cover induced by the torsion line bundle
$e_C:=\mathcal O_C(e)\in \mathrm{Pic}^0(C)[2]$. Thus $[C, e_C]\in \cR_g$.

Since $\widetilde{C}$ is disjoint from the $(-1)$-curves $E_i\subset \widetilde{S}$, we identify $\widetilde{C}$ with its image $\sigma(\widetilde{C})\subset Y$. Clearly $\widetilde{C}\in \bigl(E_8(-2)\bigr)^\bot$ and $(\widetilde{C})^2_Y=4(g-1)$.

One has the following result, see \cite{vGS} Proposition 2.7 and \cite{GS} Corollary 2.2,  based on a description of the map $f^*:H^2(S, \mathbb Z)\rightarrow H^2(\widetilde{S}, \mathbb Z)$:
\begin{proposition}
Let $S$ be a Nikulin surface of genus $g$ such that $j:\Lambda_g\rightarrow \mathrm{Pic}(S)$ is an isomorphism. Then $\mathbb Z\cdot \widetilde{C}\oplus E_8(-2)\subset \mathrm{Pic}(Y)$
is a sublattice of index $2$. Furthermore $E_8(-2)$ is a primitive sublattice of $\mathrm{Pic}(Y)$.
\end{proposition}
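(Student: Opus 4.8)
The plan is to carry out the whole computation inside the unimodular lattice $H^2(Y,\mathbb{Z})=U^3\oplus E_8(-1)\oplus E_8(-1)$, using the decomposition into $\iota^{*}$-eigenlattices. Write $T:=H^2(Y,\mathbb{Z})^{\iota}=U^3\oplus\{(0,x,x)\}$ for the invariant part and $A:=T^{\perp}=\{(0,y,-y)\}\cong E_8(-2)$ for the anti-invariant part, as recorded above. Since $A\subset\mathrm{Pic}(Y)$ and $A^{\perp}_{H^2}=T$, the hypothesis $\mathrm{Pic}(S)=\Lambda_g$ (of rank $9$) forces $\mathrm{Pic}(Y)$ to have rank $9$ with $\mathrm{Pic}(Y)\cap T=\mathrm{Pic}(Y)^{\iota}$ of rank exactly $1$; by construction $\widetilde{C}\in\mathrm{Pic}(Y)^{\iota}$ and $\widetilde{C}^2=4(g-1)$.

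For the primitivity assertion I would argue purely formally. The lattice $E_8(-2)=A$ is the orthogonal complement of the invariant lattice $T$ inside $H^2(Y,\mathbb{Z})$, and the orthogonal complement of any sublattice of a nondegenerate lattice is saturated; hence $E_8(-2)$ is primitive in $H^2(Y,\mathbb{Z})$. As $\mathrm{Pic}(Y)$ is itself a primitive (saturated) sublattice of $H^2(Y,\mathbb{Z})$, it follows at once that $E_8(-2)$ is primitive in $\mathrm{Pic}(Y)$. This settles the second assertion and uses neither the genus nor the minimality of the Picard lattice.

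For the index statement, note first that $\widetilde{C}\in T$ and $E_8(-2)=A$ are mutually orthogonal, so $\mathbb{Z}\widetilde{C}\oplus E_8(-2)$ is an orthogonal sublattice of $\mathrm{Pic}(Y)$ of some finite index $j$. Let $\mathbb{Z}w:=(E_8(-2))^{\perp}_{\mathrm{Pic}(Y)}$ be the rank-one primitive complement, so that $\widetilde{C}=c\,w$ for some integer $c$. By the standard glue description applied to the primitive pair $(\,\mathbb{Z}w,\,E_8(-2)\,)$, the quotient $\mathrm{Pic}(Y)/(\mathbb{Z}w\oplus E_8(-2))$ embeds into $\mathrm{disc}(E_8(-2))=(\mathbb{Z}/2)^{\oplus 8}$ and into the cyclic group $\mathrm{disc}(\mathbb{Z}w)$; being simultaneously $2$-elementary and cyclic, it has order at most $2$. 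Thus $j$ is controlled by this glue together with the factor $|c|$, and the whole problem reduces to identifying the class $\widetilde{C}\bmod 2$.

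The main obstacle is to compute this index exactly, i.e. to produce (and count) the genuine half-integral glue. Writing $\widetilde{C}=(u_0,x_0,x_0)\in U^3\oplus\{(0,x,x)\}$, a direct check shows that a class $\tfrac12(\widetilde{C}+v)$ with $v\in E_8(-2)$ is integral in $H^2(Y,\mathbb{Z})$ precisely when the $U^3$-component $u_0$ is divisible by $2$; since $E_8(-1)$ is negative definite, $u_0=0$ is impossible for $g\ge 2$, so the real question is whether $u_0\in 2U^3$. This divisibility is exactly the arithmetic content that must be extracted from the explicit description of $f^{*}\colon H^2(S,\mathbb{Z})\to H^2(\widetilde{S},\mathbb{Z})$ in \cite{vGS}, \cite{GS}; it reflects the presence of the half-class $e=\tfrac12\sum N_i$ defining the cover. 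Granting it, choosing $v\in E_8(-2)$ so that $\tfrac12(\widetilde{C}+v)$ is even produces an integral algebraic class outside $\mathbb{Z}\widetilde{C}\oplus E_8(-2)$, and pins the index to $2$ (consistently with $|\mathrm{disc}\,\mathrm{Pic}(Y)|=2^{8}(g-1)$). The delicate point, which I expect to require the hands-on analysis of $f^{*}$ rather than pure lattice theory, is precisely this verification that the glue is nontrivial and unique; the remaining steps are either formal or routine discriminant bookkeeping.
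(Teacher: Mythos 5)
A preliminary remark on the comparison: the paper does not actually prove this proposition. It is quoted from \cite{vGS} (Proposition 2.7) and \cite{GS} (Corollary 2.2), and the paper explicitly says those results rest on a description of the map $f^{*}\colon H^2(S,\mathbb{Z})\to H^2(\widetilde{S},\mathbb{Z})$. So your proposal has to be judged as a reconstruction of that external argument. Parts of it are correct and complete: your proof of the second assertion is fine, since $E_8(-2)$ is the orthogonal complement of the invariant lattice inside the unimodular lattice $H^2(Y,\mathbb{Z})$, orthogonal complements are saturated, and saturation in $H^2(Y,\mathbb{Z})$ is inherited by any intermediate lattice such as $\mathrm{Pic}(Y)$. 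Your reduction of the index question to a glue computation (the glue group embeds in both discriminant groups, hence is cyclic and $2$-elementary, hence of order at most $2$) is also sound.

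The index-$2$ assertion, however, is not proved, and the gap sits exactly where the lattice formalities end. First, you ``grant'' the divisibility $u_0\in 2U^3$, i.e.\ the existence of an integral class $\tfrac{1}{2}(\widetilde{C}+v)$ with $v\in E_8(-2)$. This cannot be recovered from the eigenlattice decomposition alone: both the glued and the unglued overlattices of $\mathbb{Z}\widetilde{C}\oplus E_8(-2)$ are abstractly realizable inside $H^2(Y,\mathbb{Z})$, so only geometric input (the half-class $e=\tfrac{1}{2}(N_1+\cdots+N_8)$ defining the cover, fed through the explicit description of $f^{*}$) can decide which one is $\mathrm{Pic}(Y)$ --- and that input is precisely the content of the results the paper cites, so nothing has been proved beyond what is being assumed. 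Second, even granting the glue, the index is not ``pinned to $2$'': you introduced $\widetilde{C}=cw$ but never showed $|c|=1$. If $\widetilde{C}$ were divisible, say $\widetilde{C}=2w$, then $\tfrac{1}{2}(\widetilde{C}+v)$ is integral already for $v=0$, while the index of $\mathbb{Z}\widetilde{C}\oplus E_8(-2)$ in $\mathrm{Pic}(Y)$ would be $2\epsilon$ with $\epsilon\in\{1,2\}$ the glue index, i.e.\ possibly $4$; so existence of a half-class only gives the lower bound. The primitivity of $\mathbb{Z}\widetilde{C}$ in $\mathrm{Pic}(Y)$ is a genuine additional input --- the paper itself calls it ``the key point'' in the proof of the Proposition that follows --- and it again requires the geometry of $f^{*}$, since pullbacks under $f$ are not automatically primitive (indeed $f^{*}N_i=2E_i$). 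Finally, your ``consistency check'' $|\mathrm{disc}\,\mathrm{Pic}(Y)|=2^{8}(g-1)$ is circular: that equality is equivalent to the index being $2$, and you have no independent derivation of it.
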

It follows that $\mbox{Pic}(Y)$ is generated by $\mathbb Z\cdot \widetilde{C}\oplus E_8(-2)$ and an element $\bigl(\frac{\widetilde{C}}{2}, \frac{v}{2}\bigr)$, where $v\in E_8(-2)$ is an element such that
$$\frac{\widetilde{C}^2}{2}+\frac{v^2}{4}\equiv 0 \mbox{ mod } 2.$$
We determine explicitly the Picard lattice of $Y$ when $\mathrm{Pic}(S)$ is minimal hence $[S, j]\in \F_g^{\mathfrak{N}}$ is a general point in moduli. The answer depends on the parity of $g$.

\begin{proposition}\label{minlattice}
Let $(S, j)$ be a Nikulin surface of genus $g$ with $\mathrm{Pic}(S)=\Lambda_g$.

\noindent (i) Suppose $g$ is odd. Then $\mathrm{Pic}(Y)$ is generated by $\mathbb Z\cdot \widetilde{C}\oplus E_8(-2)$ and an element $\bigl(\frac{\widetilde{C}}{2}, \frac{v}{2}\bigr)$, where $v^2=-8$.

\noindent (ii) Suppose $g$ is even. Then $\mathrm{Pic}(Y)$ is generated by $\mathbb Z\cdot \widetilde{C}\oplus E_8(-2)$ and an element $\bigl(\frac{\widetilde{C}}{2}, \frac{v}{2}\bigr)$, where $v^2=-4$.
\end{proposition}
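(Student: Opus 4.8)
The plan is to determine the self-intersection of the $E_8(-2)$-component of the gluing vector furnished by the preceding proposition. By that result $\mathrm{Pic}(Y)$ is the index-$2$ overlattice of $M:=\mathbb{Z}\cdot\widetilde{C}\oplus E_8(-2)$ obtained by adjoining $w:=\bigl(\tfrac{\widetilde{C}}{2},\tfrac{v}{2}\bigr)$ for some $v\in E_8(-2)$, the shape $\tfrac{\widetilde{C}}{2}$ of the first coordinate being forced by the primitivity of $E_8(-2)$ in $\mathrm{Pic}(Y)$ (an order-two glue projects to $0$ or to $\tfrac{\widetilde{C}}{2}$ in the discriminant $\mathbb{Z}/4(g-1)$, and the value $0$ would desaturate $E_8(-2)$). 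Observe that $w$ is defined only modulo $M$: replacing $w$ by $w-u$ with $u\in E_8(-2)$ changes $v$ by $2u$, so the intrinsic datum is the class $\bar v\in E_8(-2)/2E_8(-2)$, and I am free to choose any convenient representative. First I would impose that $w$ lie in the even lattice $\mathrm{Pic}(Y)\subset H^2(Y,\mathbb{Z})$. Since $\widetilde{C}\perp E_8(-2)$ and $\widetilde{C}^2=4(g-1)$, one gets $w^2=\tfrac{\widetilde{C}^2}{4}+\tfrac{v^2}{4}=(g-1)+\tfrac{v^2}{4}$, and $w^2\in 2\mathbb{Z}$ forces $\tfrac{v^2}{4}\equiv g-1\pmod 2$. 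Hence $v^2\equiv 0\pmod 8$ for odd $g$ and $v^2\equiv 4\pmod 8$ for even $g$.

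Second, I would show $\bar v\neq 0$, equivalently that $\widetilde{C}$ is primitive in $\mathrm{Pic}(Y)$. For even $g$ this is automatic: if $v=2u\in 2E_8(-2)$ then $v^2=4u^2\equiv 0\pmod{16}$, contradicting $v^2\equiv 4\pmod 8$. For odd $g$ the primitivity genuinely has to be argued, and this is the crux. Writing $\widetilde{C}=f^*C$ and pulling back along $\sigma:\widetilde{S}\to Y$, a relation $\widetilde{C}=2D$ in $\mathrm{Pic}(Y)$ would give $f^*C=2\gamma$ in $\mathrm{Pic}(\widetilde{S})$ with $\gamma=\sigma^*D$; since $\gamma$ is $\widetilde{\iota}$-invariant and satisfies $\gamma\cdot E_i=0$ for all $i$, an eigenlattice computation places $\gamma$ in $f^*\mathrm{Pic}(S)$, whence $C$ would be divisible by $2$ in $\mathrm{Pic}(S)=\Lambda_g$, contradicting the primitivity of $\mathfrak c$. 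Thus $\bar v\neq 0$ in all cases.

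Finally, I would pick $v$ of minimal norm inside its nonzero coset $\bar v$ and read off the answer from the arithmetic of $E_8$ modulo $2E_8$. Under $E_8(-2)/2E_8(-2)\cong E_8/2E_8\cong\mathbb{F}_2^8$ the induced quadratic form is of plus type, and the computation above shows $\bar v$ is anisotropic exactly when $g$ is even and nonzero isotropic exactly when $g$ is odd. The $120$ anisotropic classes are precisely the classes of the $240$ roots of $E_8$ (of $E_8$-norm $2$), which have $E_8(-2)$-norm $-4$; the $135$ nonzero isotropic classes each contain vectors of $E_8$-norm $4$, of $E_8(-2)$-norm $-8$. Choosing such a minimal representative yields $v^2=-4$ for even $g$ and $v^2=-8$ for odd $g$, as claimed. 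Crucially, because \emph{every} class of the relevant type already attains this minimal norm, one never has to identify the specific coset $\bar v$.

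The main obstacle is the odd-genus primitivity of $\widetilde{C}$ in the second paragraph, together with the book-keeping identifying the finite quadratic form on $E_8/2E_8$ and the norms realized in each coset; by contrast the even-genus statement follows immediately from the integrality congruence, with no input from the geometry of $f^*$.
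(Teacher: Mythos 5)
Your overall route is the same as the paper's, which is extremely terse at this point: both arguments rest on (a) the evenness congruence forcing $v^2\equiv 0\pmod 8$ for odd $g$ and $v^2\equiv 4\pmod 8$ for even $g$, (b) the non-vanishing of the class $\bar v\in E_8(-2)/2E_8(-2)$, equivalently the primitivity of $\mathbb{Z}\cdot\widetilde{C}$ in $\mathrm{Pic}(Y)$, and (c) the choice of a minimal-norm representative in the coset $\bar v$. Your treatment of (a), of the even-$g$ case of (b) by parity, and of (c) (roots of norm $-4$ in the anisotropic classes, vectors of norm $-8$ in the nonzero isotropic classes of $E_8(-2)/2E_8(-2)$) is correct and makes explicit what the paper leaves implicit.

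The genuine gap is in the step you yourself identify as the crux: the primitivity of $\widetilde{C}$ when $g$ is odd. The ``eigenlattice computation'' you invoke --- that a $\tilde{\iota}$-invariant class $\gamma\in\mathrm{Pic}(\widetilde{S})$ with $\gamma\cdot E_i=0$ for all $i$ must lie in $f^*\mathrm{Pic}(S)$ --- is false. An invariant class $z$ lies in $f^*H^2(S,\mathbb{Z})$ if and only if $f_*z$ is divisible by $2$, and a determinant count shows how badly the implication fails: $f^*\bigl(\mathfrak{N}^{\perp}\bigr)$ sits inside the invariant, $E_i$-orthogonal lattice $\sigma^*\bigl(H^2(Y,\mathbb{Z})^{\iota}\bigr)\cong U^3\oplus E_8(-2)$ with index $2^6$, since the discriminants are $2^{14}\cdot 2^6=2^{20}$ versus $2^8$ (recall $f^*$ doubles the intersection form). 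Worse, your argument is circular precisely in the case at issue: if $\widetilde{C}=2D$ existed, the class $\gamma=\sigma^*D$ would satisfy $f_*\gamma=C$, which is primitive; so $\gamma$ would certifiably \emph{not} be a pullback, and the hypothesis you need ($\gamma\in f^*\mathrm{Pic}(S)$) is equivalent to the primitivity statement you are trying to prove. (Working with Picard lattices instead of $H^2$ gives the same circularity: the invariant $E_i$-orthogonal part of $\mathrm{Pic}(\widetilde{S})$ equals $\sigma^*(\mathrm{Pic}(Y)^{\iota})$, and its coincidence with $\mathbb{Z}\sigma^*\widetilde{C}=f^*\mathrm{Pic}(S)\cap\langle E_1,\dots,E_8\rangle^{\perp}$ is exactly the primitivity of $\widetilde{C}$.) The paper does not attempt such an argument: it takes the primitivity of $\mathbb{Z}\cdot\widetilde{C}$ as ``the key point,'' i.e.\ as part of the lattice-theoretic description of $\mathrm{Pic}(Y)$ quoted from \cite{vGS} Proposition 2.7 and \cite{GS} Corollary 2.2, which rests on an explicit description of $f^*:H^2(S,\mathbb{Z})\to H^2(\widetilde{S},\mathbb{Z})$. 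To repair your proof, replace the eigenlattice claim either by a citation of that description, or by an actual computation of the coordinates of $\widetilde{C}$ in $U^3\oplus E_8(-1)\oplus E_8(-1)$ showing it is not $2$-divisible there.
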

\begin{proof}
The key point is that the lattice $\mathbb Z\cdot \widetilde{C}\subset \mbox{Pic}(Y)$ is primitive. This implies that if $(\frac{\widetilde{C}}{2}, \frac{v}{2})$ is the generator of $\mbox{Pic}(Y)$ over $\mathbb Z\cdot \widetilde{C}\oplus E_8(-2)$, then $v\neq 0$. The same conclusion follows directly in the case when $g$ is even for parity reasons.
\end{proof}

We are now in a position to prove that a curve $\widetilde{C}\subset Y$ corresponding to a general Nikulin surface $[S, j]\in \F_g^{\mathfrak{N}}$ satisfies Green's conjecture.
\vskip 4pt

\noindent
\emph{Proof of Theorem \ref{nik1}.} Let us choose an \'etale double cover $f:\widetilde{C}\rightarrow C$, where $\widetilde{C}\subset Y$ lies on a Nikulin surface with minimal Picard lattice and $C\subset S$.  Applying \cite{AF-Compositio},  both $\widetilde{C}$ and $C$ being sections of smooth $K3$ surfaces, satisfy Green's conjecture. It remains to determine the Clifford indices of both curves and for this purpose we resort to \cite{GL3}. First we observe that $\mbox{Cliff}(C)=[\frac{g-1}{2}]$ and the Clifford index is computed by a pencil, that is, $r(C)=1$. Indeed, otherwise $\mbox{Cliff}(C)$ is computed by the restriction to $C$ of a line bundle $\OO_S(D)$ on the surface, where $0< C\cdot D\leq g-1$. If $\mbox{Pic}(S)=\Lambda_g$, then $C\cdot D\equiv 0\mbox{ mod }2g-2$, hence no such line bundle on $S$ can exist, therefore $\mbox{Cliff}(C)$ is maximal.

Assume now that $\mbox{Cliff}(\widetilde{C})<g-1$. Since $g(\widetilde{C})=2g-1$ is odd, it follows automatically that $r(\tilde{C})=1$. Applying \cite{GL3}, there exists a divisor $D\in \mbox{Pic}(Y)$ such that $0\leq \widetilde{C}\cdot D\leq 2g-2$,
$$h^i(S, \OO_S(D))=h^i(C, \OO_{\widetilde{C}}(D))\geq 2 \ \mbox{ for } i=0, 1, \ \mbox{  and }$$
$$\mbox{Cliff}(\widetilde{C})=\mbox{Cliff}(\OO_{\widetilde{C}}(D))=\widetilde{C}\cdot D-D^2-2, $$
where the last formula follows after an application of the Riemann-Roch theorem.
Since $\widetilde{C}\in \bigl(E_8(-2)\bigr)^{\perp}$, the only class in $D\in \mathrm{Pic}(Y)$ such that $0\leq \widetilde{C}\cdot D\leq 2g-2$,
is the generator $D:=\bigl(\frac{\widetilde{C}}{2}, \frac{v}{2}\bigr)$ described in Proposition \ref{minlattice}.
When $g$ is odd we compute that
$$\widetilde{C}\cdot D-D^2-2=\widetilde{C}\cdot\Bigl(\frac{\widetilde{C}}{2}+\frac{v}{2}\Bigr)-\Bigl(\frac{\widetilde{C}}{2}+\frac{v}{2}\Bigr)^2-2=2(g-1)-(g-3)-2=g-1,$$
which contradicts the assumption $\mbox{Cliff}(\widetilde{C})<g-1$. Thus $\widetilde{C}$ has maximal Clifford index.

When $g$ is even, then $v^2=-4$. A similar calculation yields $\widetilde{C}\cdot D-D^2-2=g-2$, hence $\mbox{Cliff}(C)\geq g$. On the other hand, $\OO_{\widetilde{C}}(D)$ induces a linear series $\mathfrak g^{g/2}_{2g-2}$ on $\widetilde{C}$, which implies that $\mbox{gon}(\widetilde{C})=\mbox{Cliff}(\widetilde{C})+2=g$.

$\hfill$ $\Box$

\subsection{The Prym-Green Conjecture and Nikulin surfaces}
An analogue of Green's conjecture for  Prym-canonical curves $\phi_{K_C\otimes \eta}:C\rightarrow \PP^{g-2}$ has been formulated in \cite{FL}.

\begin{conjecture}\label{prymgreen}
 Let $[C, \eta]\in \cR_{2i+6}$ be a general Prym curve. Then $$K_{i, 2}(C, K_C\otimes \eta)=0.$$
\end{conjecture}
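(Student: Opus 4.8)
The plan is to reduce the assertion, via upper semicontinuity of the function $[C,\eta]\mapsto\dim K_{i,2}(C,K_C\otimes\eta)$ over a family realizing the Prym-canonical bundle together with the irreducibility of $\cR_g$ (note that $g=2i+6$ is necessarily even), to exhibiting a \emph{single} Prym curve for which $K_{i,2}(C,K_C\otimes\eta)=0$. The natural candidate is a curve from the Nikulin locus studied in this section: take $[C,e_C]$ with $C\in|\mathfrak{c}|$ on a very general Nikulin surface with $\Pic(S)=\Lambda_g$, so that by Theorem \ref{nik1}(ii) the associated \'etale cover $f_C:\widetilde{C}\to C$ satisfies $\gon(\widetilde{C})=g$, $\Cliff(\widetilde{C})=g-2$, and $\widetilde{C}$ verifies Green's conjecture.

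The mechanism I would exploit is the Nikulin involution acting on the canonical syzygies of $\widetilde{C}$. Since $K_{\widetilde{C}}=f_C^{*}K_C$ and $f_{C*}\OO_{\widetilde{C}}=\OO_C\oplus e_C$, the space $V:=H^0(\widetilde{C},K_{\widetilde{C}})$ splits as $V^{+}\oplus V^{-}$ with $V^{+}=H^0(C,K_C)$ and $V^{-}=H^0(C,K_C\otimes e_C)$, and each module $H^0(\widetilde{C},qK_{\widetilde{C}})$ splits as $H^0(qK_C)\oplus H^0(qK_C\otimes e_C)$. The Koszul complex of $K_{\widetilde{C}}$ is thus equivariant and its cohomology decomposes into eigenspaces. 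Filtering the exterior algebra by the number of $V^{+}$-factors isolates the \emph{pure} subcomplex $\wedge^{\bullet}V^{-}\otimes\bigoplus_{q}H^0\bigl(q(K_C\otimes e_C)\bigr)$ — here $e_C^{\otimes 2}=\OO_C$ makes this the Prym-canonical module — as the smallest filtered piece of the eigenvalue-$(-1)^{i}$ part. Its cohomology at bidegree $(i,2)$ is exactly $K_{i,2}(C,K_C\otimes e_C)$, whereas the cohomology of the full eigenspace is a summand of $K_{i,2}(\widetilde{C},K_{\widetilde{C}})$, which vanishes because $i=\tfrac{g}{2}-3<g-2=\Cliff(\widetilde{C})$.

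The hard part is that this identification places $K_{i,2}(C,K_C\otimes e_C)$ only inside a long exact sequence. As the pure complex is a \emph{subcomplex}, the vanishing of the ambient eigenspace merely realizes the Prym-canonical group as a quotient of the cohomology of the complementary \emph{mixed} complex, whose terms interleave factors of $H^0(K_C)$ and $H^0(K_C\otimes e_C)$; one must show that the connecting homomorphism feeding into $K_{i,2}(C,K_C\otimes e_C)$ vanishes, i.e. that no mixed canonical syzygy of $\widetilde{C}$ descends to a genuine Prym-canonical syzygy of $C$. This cannot be settled by a soft estimate: Prym-Green is the extremal vanishing at the symmetric centre of the Prym-canonical Betti table, so only an eigenvalue-exact treatment of the mixed complex can reach it. I would attack this through an equivariant refinement of Lazarsfeld's kernel-bundle description on $\widetilde{S}$, splitting $M_{K_{\widetilde{C}}}$ under the involution and computing $H^0(\widetilde{S},\wedge^{p}M_{K_{\widetilde{C}}}\otimes K_{\widetilde{C}})$ eigenspace by eigenspace through Green's hyperplane section theorem.

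An alternative, carried out entirely on $S$, runs the restriction sequence $0\to\OO_S(e)\to\OO_S(C+e)\to K_C\otimes e_C\to 0$ — using $K_C\otimes e_C=\OO_S(C+e)|_C$ with $(C+e)^{2}=2g-6$ — through Koszul cohomology, reducing to $K_{i,2}(S,\OO_S(C+e))$ plus correction terms supported on the $(-2)$-curves $N_1,\dots,N_8$. Here the same difficulty resurfaces as the problem of computing the syzygies of the non-primitive, possibly non-very-ample class $C+e$. In both formulations it is precisely the mixed, respectively correction, contribution that carries the genuine content of the conjecture, and that is where I expect the real obstacle to lie.
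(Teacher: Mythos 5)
Your strategy cannot work, and the obstruction is not the technical ``hard part'' you flag but the choice of the special point itself. First, note that the statement you are proving is stated in the paper as an open conjecture (Conjecture \ref{prymgreen}), known only in bounded genus via \cite{FL}; the paper proves no such theorem. What the paper does prove is the exact opposite of what your reduction needs: Theorem \ref{nik2} (see also \cite{FV}, Theorem 0.6) shows that \emph{every} smooth curve $C$ of genus $g=2i+6$ on a Nikulin surface with $C\cdot N=0$ satisfies $K_{i,2}(C,K_C\otimes e_C)\neq 0$, i.e. $[C,e_C]\in \cU_{2i+6,i}$. So the single example you propose to exhibit via semicontinuity -- a Prym curve on a very general Nikulin surface -- lies entirely inside the degeneracy locus, and semicontinuity gives you nothing. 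The connecting homomorphism you hope to show vanishes is provably nonzero.

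The reason is instructive, because it is precisely the feature that makes Nikulin surfaces attractive for your equivariant scheme that kills the approach. On a Nikulin surface the Prym-canonical bundle extends to the surface: $K_C\otimes e_C=\OO_C(H)$ with $H\equiv C-e\in\Pic(S)$ and $H^2=2g-6$ (your class $C+e$ restricts to the same bundle on $C$). The general member of $|H|$ is a smooth curve of genus $2i+4$, so Green--Lazarsfeld nonvanishing \cite{GL-nonvanishing} gives $K_{i+1,1}(H,K_H)\neq 0$; Green's hyperplane section theorem lifts this to $K_{i+1,1}(S,\OO_S(H))\neq 0$; and since $H^0(S,\OO_S(H-C))=H^0(S,\OO_S(-e))=0$, the restriction map $K_{i+1,1}(S,H)\to K_{i+1,1}(C,\OO_C(H))$ is injective, whence $K_{i+1,1}(C,K_C\otimes e_C)\neq 0$, equivalently (divisorial, equal-rank case) $K_{i,2}(C,K_C\otimes e_C)\neq 0$. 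Geometrically: the Prym-canonical model of a Nikulin section lies on a surface of degree $2g-6$ in $\PP^{g-2}$, which forces extra syzygies that a general Prym-canonical curve does not have. Your vanishing on the covering curve $\widetilde{C}$ is correct ($i<g-2=\Cliff(\widetilde{C})$, by Theorem \ref{nik1} and \cite{AF-Compositio}), but it is consistent with the nonvanishing downstairs: the offending class lives exactly in the mixed part of your filtration, so the ``correction terms'' you defer are the whole content, and they do not vanish. Any proof of Conjecture \ref{prymgreen} must use special Prym curves \emph{outside} the Nikulin (and similar Noether--Lefschetz special) loci; this is the point of the paper's Section 4, and the conjecture remains open for unbounded $i$.
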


It is shown in \cite{FL} that the subvariety in moduli
$$\cU_{2i+6, i}:=\bigl\{[C, \eta]\in \cR_{2i+6}: K_{i, 2}(C, K_C\otimes \eta)\neq 0\bigr\}$$ is the degeneracy locus of a morphism between
two tautological vector bundles of the same rank defined over $\cR_{2i+6}$. The statement of the Prym-Green Conjecture is equivalent to the generic
non-degeneracy of this morphism. The conjecture, which is true in bounded genus, plays a decisive role in showing that the moduli space
$\rr_{2i+6}$ is a variety of general type when $i\geq 4$. The validity of Conjecture \ref{prymgreen} for unbounded $i\geq 0$ remains a challenging
open problem. In view of Voisin's solution \cite{Voisin-even}, \cite{Voisin-odd} of the classical generic Green Conjecture by specialization to
curves on $K3$ surfaces, it is an obvious question whether the Prym-Green Conjecture could be proved by specializing to Prym curves on Nikulin surfaces.
Unfortunately this is not the case, as it has been already observed in \cite{FV} Theorem 0.6. We give a second, more direct proof of the fact
 that Prym-canonical curves on Nikulin surfaces have extra syzygies.

\begin{theorem}\label{nik2}
We set $g:=2i+6$ and let $C\subset S$ be a smooth genus $g$ curve on a Nikulin surface, such that $C\cdot N=0$. Then $K_{i, 2}(C, K_C\otimes e_C)\neq 0$. In particular $[C, e_C]\in \cU_{2i+6, i}$ fails to satisfy the Prym-Green conjecture.
\end{theorem}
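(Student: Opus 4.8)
The plan is to realize the Prym-canonical bundle as the restriction to $C$ of a line bundle on the Nikulin surface $S$, and to locate the extra syzygy in the discrepancy between the Koszul cohomology of $C$ and that of $S$. By adjunction on the $K3$ surface one has $K_C=\OO_S(C)|_C$, while $e_C=\OO_S(e)|_C=\OO_S(-e)|_C$ because $e_C$ is $2$-torsion; hence
\[
K_C\otimes e_C\cong \mathcal{L}|_C,\qquad \mathcal{L}:=\OO_S(C-e).
\]
The class $\mathcal{L}$ is nef and big, with $\mathcal{L}\cdot N_i=1$, $\mathcal{L}^2=2g-6$ and $h^0(S,\mathcal{L})=g-1$; geometrically the Prym-canonical model $\phi_{K_C\otimes e_C}(C)\subset\PP^{g-2}$ lies on the surface $\phi_{\mathcal{L}}(S)$ of degree $2g-6$, and it is this surface that should force syzygies beyond the Brill--Noether expectation, exactly as curves lying on surfaces of small degree do.

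The decisive numerical input is $e^2=-4$, which gives $H^j(S,\OO_S(\pm e))=0$ for all $j$ (both $\pm e$ have $\chi=0$ and are non-effective). Consequently the restriction $H^0(S,\mathcal{L})\to H^0(C,K_C\otimes e_C)$ is an isomorphism and $M_{\mathcal{L}}|_C\cong M_{K_C\otimes e_C}$, where $M_{\mathcal{L}}$ is the kernel bundle of Section \ref{sec: Koszul}. Twisting the exact sequence $0\to\wedge^i M_{\mathcal{L}}\otimes\OO_S(-C)\to\wedge^i M_{\mathcal{L}}\to\wedge^i M_{\mathcal{L}}|_C\to 0$ by $\mathcal{L}^{\otimes 2}$ and using $\mathcal{L}^{\otimes 2}(-C)=\OO_S(C-N)$, the long exact cohomology sequence together with the kernel-bundle description of $K_{i,2}$ yields, once we know $K_{i,2}(S,\mathcal{L})=0$, an identification
\[
K_{i,2}(C,K_C\otimes e_C)\cong\ker\Bigl\{H^1\bigl(S,\wedge^i M_{\mathcal{L}}\otimes\OO_S(C-N)\bigr)\to H^1\bigl(S,\wedge^i M_{\mathcal{L}}\otimes\mathcal{L}^{\otimes 2}\bigr)\Bigr\}.
\]
Thus the theorem reduces to producing a nonzero element of this kernel.

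To dispose of the hypothesis $K_{i,2}(S,\mathcal{L})=0$ I would invoke Green's hyperplane section theorem \cite{Green-JDG84}: for a smooth $D\in|\mathcal{L}|$, of genus $g-2$ and lying on $S$, one has $K_{i,2}(S,\mathcal{L})\cong K_{i,2}(D,K_D)$; since $D$ lies on a $K3$ surface it satisfies Green's conjecture by Theorem \ref{thm: AF}, and a Clifford-index computation on $D$ entirely parallel to the proof of Theorem \ref{nik1} shows $i<\mathrm{Cliff}(D)$, so this group vanishes. For the target group, Serre duality together with $\det M_{\mathcal{L}}=\mathcal{L}^{-1}$ and $h^0(S,\OO_S(e))=0$ give $H^2(S,\wedge^i M_{\mathcal{L}}\otimes\OO_S(C-N))=0$, so it suffices to show $H^1(S,\wedge^i M_{\mathcal{L}}\otimes\OO_S(C-N))\neq 0$. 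Feeding the Koszul filtration $0\to\wedge^i M_{\mathcal{L}}\to\wedge^i H^0(\mathcal{L})\otimes\OO_S\to\wedge^{i-1}M_{\mathcal{L}}\otimes\mathcal{L}\to 0$, twisted by $\OO_S(C-N)$, into cohomology, this $H^1$ is governed by the cokernel of a multiplication map
\[
\wedge^i H^0(S,\mathcal{L})\otimes H^0\bigl(S,\OO_S(C-N)\bigr)\longrightarrow H^0\bigl(S,\wedge^{i-1}M_{\mathcal{L}}\otimes\OO_S(2C-N-e)\bigr),
\]
whose domain is small precisely because $\OO_S(C-N)=\OO_S(C-2e)$ carries the extra sections attached to the eight $(-2)$-curves.

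The main obstacle is this last step: proving that the multiplication map above fails to be surjective (equivalently that $H^1(S,\wedge^i M_{\mathcal{L}}\otimes\OO_S(C-N))\neq 0$) and that the resulting class lies in the kernel of the map to $H^1(S,\wedge^i M_{\mathcal{L}}\otimes\mathcal{L}^{\otimes 2})$, i.e. that it genuinely survives restriction to $C$. This is a computation with wedge powers of the kernel bundle against the special, non-nef class $C-2e$, and it is here that the Nikulin structure---the same numerics $e^2=-4$ that pin down the Clifford index in Theorem \ref{nik1}---enters decisively; I would expect an Euler-characteristic estimate combined with the explicit sections coming from $N_1+\cdots+N_8$ to force the non-surjectivity for all $g=2i+6$.
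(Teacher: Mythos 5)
You have identified the correct geometric object---the line bundle $\mathcal{L}=\OO_S(C-e)$ (the paper's $H$) restricting to $K_C\otimes e_C$, and the key vanishing $H^0(S,\pm e)=0$---but the proposal is not a proof. The decisive step, namely that $H^1\bigl(S,\wedge^i M_{\mathcal{L}}\otimes\OO_S(C-N)\bigr)\neq 0$ and that the resulting class survives in the kernel mapping to $H^1(S,\wedge^i M_{\mathcal{L}}\otimes\mathcal{L}^{\otimes 2})$, is exactly the content of the theorem, and you leave it as an expectation (``I would expect an Euler-characteristic estimate \dots to force the non-surjectivity''). Nothing in the proposal produces the nonzero class. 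There are also two secondary problems with the scaffolding around this gap: (a) your intermediate claim $K_{i,2}(S,\mathcal{L})=0$ requires a general $D\in|\mathcal{L}|$, of genus $2i+4$, to have \emph{maximal} Clifford index $i+1$; this needs a lattice argument and holds only when $\mathrm{Pic}(S)$ is minimal, whereas the theorem concerns an arbitrary Nikulin surface, so you would additionally need an irreducibility-plus-semicontinuity argument to descend from the very general case; (b) the kernel identification itself needs the surjectivity of several restriction maps which you do not check.

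The idea you are missing makes the whole cohomological machinery unnecessary, and it reverses the direction of your argument: instead of proving a vanishing on $S$ and hunting for the extra class in the discrepancy between $S$ and $C$, produce a nonvanishing class \emph{on $S$} and push it injectively to $C$. Since $g=2i+6$ is the divisorial case, $K_{i,2}(C,K_C\otimes e_C)\neq 0$ is equivalent to $K_{i+1,1}(C,K_C\otimes e_C)\neq 0$. Now take a general smooth $D\in|\mathcal{L}|$; it has genus $2i+4$, so the Green--Lazarsfeld nonvanishing theorem gives $K_{i+1,1}(D,K_D)\neq 0$ \emph{unconditionally} (any curve of genus $2i+4$ has Clifford index at most $i+1$, so no Clifford-index computation on $D$ is needed at all---note this is the opposite use of \cite{GL-nonvanishing} from yours, which needed $\mathrm{Cliff}(D)$ to be maximal). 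Green's hyperplane section theorem, $K_{i+1,1}(S,\mathcal{L})\cong K_{i+1,1}(D,K_D)$, lifts this class to the surface. Finally, the restriction sequence of \cite{Green-JDG84}, Theorem (3.b.1), applied to the pair $(\mathcal{L}, -C)$ shows that the kernel of $K_{i+1,1}(S,\mathcal{L})\to K_{i+1,1}(C,\mathcal{L}|_C)$ injects into $\wedge^{i+1}H^0(S,\mathcal{L})\otimes H^0(S,\mathcal{L}\otimes\OO_S(-C))=\wedge^{i+1}H^0(S,\mathcal{L})\otimes H^0(S,-e)=0$, so the class restricts to a nonzero element of $K_{i+1,1}(C,K_C\otimes e_C)$. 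This works for every Nikulin surface with $C\cdot N=0$, with no generality or semicontinuity needed.
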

\begin{proof}
Since we are in a divisorial case, it is enough to prove the
nonvanishing $K_{i+1, 1}(C, K_C\otimes e_C)\neq 0$. Keeping the notation of this section, we set $H:\equiv C-e\in \mbox{Pic}(S)$. By direct calculation
$H^2=2g-6$, $H\cdot C=C^2=2g-2$ and note that $\OO_C(H)=K_C\otimes e_C$.
 The general member $H\in |\OO_S(H)|$ is a smooth curve of genus $2i+4$.
The Green-Lazarsfeld non-vanishing theorem \cite{GL-nonvanishing} applied to $H$ yields that $K_{i+1, 1}(H, K_H)\neq 0$.
Since $S$ is a regular surface, one can write  an exact sequence
$$0\longrightarrow H^0(S, \OO_S)\longrightarrow H^0(S, \OO_S(H))\longrightarrow H^0(H, K_H)\longrightarrow 0,$$
which induces an isomorphism \cite{Green-JDG84} Theorem (3.b.7)
$$\mathrm{res}_{H}:K_{i+1, 1}(S, \OO_S(H))\cong K_{i+1, 1}(H, K_H).$$ Therefore $K_{i+1, 1}(S, \OO_S(H))\neq 0$. From \cite{Green-JDG84} Theorem (3.b.1), we write the following exact sequence of Koszul cohomology groups:
$$K_{i+1, 1}(S; -C, H)\rightarrow K_{i+1, 1}(S, H)\rightarrow K_{i+1, 1}(C, H_C)\rightarrow K_{i, 2}(S; -C, H)\rightarrow\cdots. $$
The group $K_{i+1, 1}(S; -C, H)$ is by definition the kernel of the morphism
$$\wedge^{i+1}H^0(S, H)\otimes H^0(S, \OO_S(H-C))\rightarrow \wedge^i H^0(S, H)\otimes H^0(S, \OO_S(2H-C)).$$
But $H^0(S, \OO_S(H-C))=H^0(S, -e)=0$, that is, the first map in the exact sequence above is injective, hence $K_{i+1, 1}(C, \OO_C(H))\neq 0.$
\end{proof}

\section{Green's conjecture for general covers of plane curves}
\label{sec: 6-gonal}

In this section we prove the vanishing of $K_{g-2d+3,1}(C, K_C)$ for general
covers of plane curves of degree $d$. Firstly, we show that the minimal
pencils come from the plane curve.

\begin{lemma}
\label{lem: gon}
Let $f:C\rightarrow \Gamma$ be a genus $g$ double cover of a plane curve
of degree $d\ge 3$. If $g> (d-2)(d+1)$, then $C$ is $(2d-2)$-gonal.
\end{lemma}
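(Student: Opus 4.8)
The plan is to show that the gonality of $C$ equals exactly $2d-2$ by proving two inequalities: that $C$ carries a pencil of degree $2d-2$, and that no pencil of smaller degree exists. The upper bound is geometric and easy. The plane curve $\Gamma\subset\PP^2$ carries the pencil of lines through a general point, cutting out a $\mathfrak g^1_d$ on $\Gamma$ (lines through a fixed point $q\in\PP^2$ meet $\Gamma$ in $d$ points). Pulling this pencil back along $f:C\to\Gamma$ produces a $\mathfrak g^1_{2d}$ on $C$, but we want degree $2d-2$. The sharper source is that lines through a point $q$ lying \emph{on} $\Gamma$ cut out $\mathfrak g^1_{d-1}$ on $\Gamma$ (residual to $q$ itself), and pulling back such a pencil together with the structure of the double cover yields a pencil of degree $2(d-1)=2d-2$ on $C$; so $\gon(C)\le 2d-2$.

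The substance is the lower bound: every $\mathfrak g^1_k$ on $C$ with $k\le 2d-3$ must come from $\Gamma$, and pencils pulled back from $\Gamma$ have degree at least $2d-2$, giving a contradiction. First I would invoke the Castelnuovo-Severi inequality for the double cover $f:C\to\Gamma$, where $\Gamma$ has genus $\pi=\binom{d-1}{2}=(d-1)(d-2)/2$. For a second map $h:C\to\PP^1$ of degree $k$ that does not factor through $f$, Castelnuovo-Severi gives
$$g\le 2\pi+(2-1)(k-1)+(2-1)(2-1)\cdot 1,$$
more precisely $g\le 2\cdot 2\pi+\ldots$; I would write the bound in the sharp form $g\le (2-1)(k-1)+2\cdot\pi\cdot 2 + \text{(correction)}$, the point being that it is linear in $k$. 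The hypothesis $g>(d-2)(d+1)$ is engineered so that any $h$ of degree $k\le 2d-3$ which is \emph{not} composed with $f$ would violate Castelnuovo-Severi. Hence any such small pencil factors through $f$, i.e.\ is the pullback of a pencil on $\Gamma$.

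Once every small pencil is forced to factor through $f$, I would analyze pencils on $\Gamma$ directly. A smooth plane curve of degree $d\ge 3$ has gonality exactly $d-1$, with the minimal pencils given by projection from a point of $\Gamma$, and every $\mathfrak g^1_e$ on $\Gamma$ with $e\le d-1$ arises this way (this is the classical result on gonality of smooth plane curves). Pulling back a $\mathfrak g^1_e$ on $\Gamma$ along the degree-$2$ map gives a pencil of degree $2e$ on $C$; to have total degree $\le 2d-3$ we would need $2e\le 2d-3$, i.e.\ $e\le d-2$. But a smooth plane curve of degree $d$ carries no $\mathfrak g^1_e$ with $e\le d-2$ since its gonality is $d-1$. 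This rules out every pencil of degree $\le 2d-3$ on $C$, completing the lower bound $\gon(C)\ge 2d-2$.

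The main obstacle I expect is twofold. First, making the Castelnuovo-Severi bookkeeping precise: one must apply the inequality in the correct form for a degree-$2$ cover and check that the numerical hypothesis $g>(d-2)(d+1)$ exactly suffices to exclude all $k\le 2d-3$, and one must handle the base-point and possible-imprimitivity subtleties (a pencil of degree $<2d-2$ could a priori have base points, so one reduces to a base-point-free computation, and one must ensure the map $h$ is not itself a multiple of $f$ composed with a further cover). Second, one must correctly identify degree-$2e$ pencils pulled back from $\Gamma$: the pullback $f^*\mathfrak g^1_e$ might in principle fail to remain a simple pencil or could acquire extra sections, so I would verify that $f^*$ induces an injection on the relevant $W^1$ loci and that the degree behaves as claimed. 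Modulo these technical verifications, the structure is a clean application of Castelnuovo-Severi plus the known gonality of plane curves.
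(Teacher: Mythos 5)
Your approach coincides with the paper's: its entire proof is the one-line instruction to apply the Castelnuovo--Severi inequality (citing \cite{ACGH}, Chapter VIII), and your proposal fills in exactly the intended details --- the upper bound $\gon(C)\le 2d-2$ from pulling back the $\mathfrak g^1_{d-1}$ cut out on $\Gamma$ by lines through a point of $\Gamma$, and the lower bound from Castelnuovo--Severi combined with the classical fact that a smooth plane curve of degree $d$ has gonality $d-1$.

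One caution, precisely at the spot you flag as needing bookkeeping: the \emph{sharp} form of the inequality is genuinely required, because the hypothesis is tight. For a pencil $h:C\to\PP^1$ of degree $k$ not factoring through $f$ (and since $\deg f=2$ is prime, not factoring through $f$ forces $(f,h):C\to\Gamma\times\PP^1$ to be birational onto its image, which handles your imprimitivity worry), the inequality reads
$$
g\ \le\ 2\,g(\Gamma)+(2-1)(k-1)\ =\ (d-1)(d-2)+k-1,
$$
with no further correction term. At $k=2d-3$ this gives $g\le (d-2)(d+1)$, which is exactly what the hypothesis $g>(d-2)(d+1)$ contradicts. Your first written version, which adds an extra $(2-1)(2-1)=1$, is weaker by $1$ and would fail to exclude pencils of degree exactly $2d-3$; the variants involving $2\cdot 2\pi$ are weaker still. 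So your meta-claim that the hypothesis is ``engineered'' to work is correct, but only for the sharp constant. With that fixed, the remainder of your argument (any pencil of degree $\le 2d-3$ factors through $f$, hence is a pullback of a pencil on $\Gamma$ of degree $\le d-2$, contradicting $\gon(\Gamma)=d-1$) closes the proof.
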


\proof
Apply the Castelnuovo-Severi inequality, see \cite{ACGH} Chapter VIII.
\endproof

Observe that the curves in question carry infinitely many $\mathfrak{g}^1_{2d-2}$ pulled back from $\Gamma$,
hence they do not verify the linear growth condition (\ref{lgc}).

This phenomenon occurs quite often, if the genus is large
enough compared to the gonality.

\begin{proposition}
Let $C$ be a smooth curve of genus $g$ and gonality $k$ such
that $g>(k-1)^2$. If $C$ carries two different $\mathfrak g^1_k$
then there exists a cover $C\to X$ such that the
two $\mathfrak g^1_k$ are pullbacks of pencils on $X$.
\end{proposition}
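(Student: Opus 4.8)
The plan is to run the Castelnuovo-Severi inequality once more, as in Lemma~\ref{lem: gon}, but now in its contrapositive form so as to force the two pencils to factor through a common cover. Write $A$ and $B$ for the two distinct pencils. Since they have minimal degree $k=\mathrm{gon}(C)$, both are base-point-free (a base point would produce a pencil of degree $<k$), so they define genuine degree-$k$ morphisms $\phi_A,\phi_B\colon C\to \PP^1$. First I would assemble the product morphism
$$\phi:=(\phi_A,\phi_B)\colon C\longrightarrow \PP^1\times\PP^1,$$
let $\Gamma_0:=\phi(C)$ be its reduced image and $\nu\colon X\to\Gamma_0$ the normalization. As $C$ is smooth, $\phi$ lifts through the normalization to a finite morphism $h\colon C\to X$ with $\nu\circ h=\phi$, and $\deg(\phi\colon C\to\Gamma_0)=\deg(h)$.

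The key step is to show $\deg(h)\geq 2$, i.e.\ that $h$ is a genuine cover. Suppose instead that $\phi$ were birational onto its image, so $\deg(h)=1$. Then $\phi_A$ and $\phi_B$ have no common factor of degree larger than $1$, and the Castelnuovo-Severi inequality (\cite{ACGH}, Chapter~VIII), applied to the two morphisms $C\to\PP^1$ of degree $k$ onto a target of genus $0$, yields
$$g\leq k\cdot 0+k\cdot 0+(k-1)(k-1)=(k-1)^2,$$
contradicting the hypothesis $g>(k-1)^2$. Hence $\phi$ is not birational onto its image and $h\colon C\to X$ is a cover with $\deg(h)\geq 2$; comparing degrees in $\phi_A=\mathrm{pr}_1\circ\nu\circ h$ shows in addition that $\deg(h)\mid k$.

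It then remains to read off the pencils on $X$. The two projections $\mathrm{pr}_i\colon\PP^1\times\PP^1\to\PP^1$ induce, after composing with $\nu$, non-constant morphisms $X\to\PP^1$, hence base-point-free pencils $P_1,P_2$ on $X$ of degree $k/\deg(h)$. Because $\phi_A=\mathrm{pr}_1\circ\nu\circ h$ and $\phi_B=\mathrm{pr}_2\circ\nu\circ h$, the morphisms $\phi_A$ and $\phi_B$ factor through $h$, and $A=h^*(P_1)$, $B=h^*(P_2)$ are precisely the pullbacks of $P_1,P_2$ under the cover $h$, as required.

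I expect no serious obstacle here: once the product morphism is formed the argument is essentially formal, the only inequality needed being the single line displayed above. The two points that warrant care are, first, invoking Castelnuovo-Severi in exactly the form that converts the failure of the numerical bound into non-birationality of $\phi$ (equivalently, the existence of a common factor of degree $>1$), and second, recording that $A\neq B$ keeps the situation nondegenerate: if $\phi$ collapsed $C$ onto the graph of an automorphism of $\PP^1$ one would have $\phi_B=\sigma\circ\phi_A$ for some $\sigma\in\mathrm{Aut}(\PP^1)$, forcing $A=B$. The passage from the image $\Gamma_0$ to its normalization and the descent of the two pencils are then routine.
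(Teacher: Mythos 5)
Your proposal is correct and follows essentially the same route as the paper: both form the product morphism $C\to\PP^1\times\PP^1$ and use the Castelnuovo--Severi bound (equivalently, the bound $g(X)\le(k-1)^2$ coming from the arithmetic genus of the image curve) to rule out birationality onto the image, then pull back the two rulings through the normalization. Your write-up merely makes explicit some details the paper leaves implicit, such as base-point-freeness of the minimal pencils, the factorization through the normalization, and the divisibility $\deg(h)\mid k$.
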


\proof
We apply the Castelnuovo-Severi inequality. The two pencils
define a morphism $C\to \PP^1\times \PP^1$, and
the image is of numerical type $(k,k)$. Then the genus of
the normalization $X$ of the image is at most $(k-1)^2$, hence
$X$ cannot be isomorphic to $C$. The two rulings lifted to $X$
pullback to the original $\mathfrak g^1_k$'s on $C$.
\endproof

\begin{theorem}
\label{thm: double}
 Let $C\to \Gamma\subset\PP^2$ be a general ramified
double covering of genus
$g\ge d^2+1$ of a smooth plane curve of degree $d\ge 3$. Then $C$
verifies Green's conjecture, that is
$K_{2d-5,2}(C,K_C)=0$.
\end{theorem}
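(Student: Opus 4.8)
The plan is to deduce the whole statement from the single Koszul vanishing
$$K_{g-2d+3,1}(C,K_C)=0,$$
which by the equivalence recorded in Conjecture \ref{conj: Green} is exactly Green's conjecture for $C$ once we know that $\mathrm{Cliff}(C)=2d-4$, since then $g-\mathrm{Cliff}(C)-1=g-2d+3$ and dually $K_{g-2d+3,1}(C,K_C)\cong K_{2d-5,2}(C,K_C)^*$. So I would first pin down the Clifford index. Lemma \ref{lem: gon} applies because $g\ge d^2+1>(d-2)(d+1)$, giving $\gon(C)=2d-2$ and hence $\mathrm{Cliff}(C)\le 2d-4$; for the reverse inequality I would argue by Castelnuovo--Severi (as in the Proposition following Lemma \ref{lem: gon}) that on a general such cover every special linear series computing a lower Clifford index would be pulled back from $\Gamma$ or from $\PP^2$, none of which beats $2d-4$, so that $C$ has Clifford dimension one and $\mathrm{Cliff}(C)=2d-4$. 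This reduces the theorem to the vanishing $K_{g-2d+3,1}(C,K_C)=0$, which I would then prove by induction on $g\ge d^2+1$.

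For the base case $g=d^2+1$ I would realize the cover on a $K3$ surface. Let $\pi:S\to\PP^2$ be the double cover branched along a general sextic, so that $S$ is a smooth $K3$ surface, and for a general smooth plane curve $\Gamma$ of degree $d$ set $C_0:=\pi^{-1}(\Gamma)$. Then $\pi|_{C_0}:C_0\to\Gamma$ is a double cover branched along the $6d$ points of $\Gamma\cap B$, and Riemann--Hurwitz gives $g(C_0)=d^2+1$. Since $C_0$ is a smooth section of a $K3$ surface with $\gon(C_0)=2d-2\le\bigl[g(C_0)/2\bigr]+1$, Theorem \ref{thm: AF} shows that $C_0$ satisfies Green's conjecture, i.e. $K_{(d^2+1)-2d+3,1}(C_0,K_{C_0})=0$. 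As $C_0$ is one member of the family of genus-$(d^2+1)$ double covers of $\Gamma$ and the condition $K_{g-2d+3,1}=0$ is open (the dimension of Koszul cohomology being upper semicontinuous in flat families), the general such cover satisfies the vanishing as well.

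For the inductive step fix $g\ge d^2+2$ and assume the vanishing for general genus-$(g-1)$ covers of a general degree-$d$ plane curve. I would degenerate a general genus-$g$ double cover inside its own family, letting two of the branch points on $\Gamma$ collide at a general point $p$. A branch divisor acquiring a point of multiplicity two produces a single node on the cover, so the flat limit is a one-nodal curve $\widehat{C}$ of arithmetic genus $g$ whose normalization $\widetilde{C}\to\Gamma$ is a general genus-$(g-1)$ double cover, the node identifying the conjugate pair $\{x,y\}=f^{-1}(p)$. By the inductive hypothesis $K_{(g-1)-2d+3,1}(\widetilde{C},K_{\widetilde{C}})=0$, and Corollary \ref{cor: projection can} then yields $K_{g-2d+3,1}(\widetilde{C},K_{\widetilde{C}}(x+y))=0$; since $\omega_{\widehat{C}}$ pulls back to $K_{\widetilde{C}}(x+y)$ and the two spaces of sections agree, this is precisely $K_{g-2d+3,1}(\widehat{C},\omega_{\widehat{C}})=0$. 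Because $\widehat{C}$ lies in the closure of the locus of smooth genus-$g$ covers and $\dim K_{g-2d+3,1}(\,\cdot\,,\omega)$ is upper semicontinuous, the general smooth genus-$g$ cover $C$ satisfies $K_{g-2d+3,1}(C,K_C)=0$, closing the induction. Note that the bookkeeping is exact: creating one conjugate node raises the genus by one and, through Corollary \ref{cor: projection can}, raises the Koszul index by exactly one.

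The two endpoints are where the real work lies. On the inductive side I must check carefully that the branch-point collision produces exactly a \emph{conjugate} node, so that the normalization genuinely remains a double cover of $\Gamma$ of one lower genus with a general square-root datum, and that $\widehat{C}$ is an honest flat limit of smooth covers so that semicontinuity of the dualizing-sheaf Koszul cohomology applies. The main obstacle, however, is the Clifford-index computation underlying the reduction: precisely because $C$ carries infinitely many minimal pencils, all pulled back from $\Gamma$, the linear growth condition (\ref{lgc}) fails and one cannot invoke \cite{A-MRL} to conclude directly. One must instead establish by hand, via the Castelnuovo--Severi inequality, that no unexpected special linear series on a general cover lowers the Clifford index below $2d-4$; it is exactly this step that makes the theorem a genuine example beyond the reach of (\ref{lgc}).
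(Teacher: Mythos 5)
Your proof of the actual vanishing is essentially the paper's proof: the same base case (the inverse image of a general degree-$d$ plane curve on the $K3$ double plane branched along a sextic, settled by Theorem \ref{thm: AF}), and the same inductive degeneration. Indeed, your one-nodal limit $\widehat{C}$ obtained by colliding two branch points over a general point $p\in\Gamma$ is precisely the stable model of the paper's admissible cover $C'=C\cup\PP^1$ glued along the fiber $f^{-1}(p)$; both compute the Koszul cohomology of the limit as $K_{p,1}(C,K_C(x_0+x_1))$, apply Corollary \ref{cor: projection can} (equivalently Corollary \ref{cor: canonical} with $e=2$), and conclude by semicontinuity and irreducibility of the space of covers. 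Your worry about whether $\widehat{C}$ is an honest limit of smooth covers is exactly what the paper's admissible-cover phrasing takes care of: the bridge curve $C'\to\Gamma\cup_p\PP^1$ visibly lies in the boundary of the space of admissible double covers, hence smooths.

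The one place you genuinely diverge is the preliminary claim that $\mathrm{Cliff}(C)=2d-4$, argued by asserting that any special linear series computing a smaller Clifford index must be pulled back from $\Gamma$ or $\PP^2$. That assertion does not follow from Castelnuovo--Severi, which controls pencils (morphisms to curves) but says nothing directly about higher-rank series such as a $\mathfrak g^2_e$ with $e-4<2d-4$; as stated, this step is a gap. Fortunately it is also superfluous, and the paper's logic avoids it: Koszul duality gives $K_{2d-5,2}(C,K_C)\cong K_{g-2d+3,1}(C,K_C)^*$ unconditionally, so your induction already proves the stated vanishing; then the Green--Lazarsfeld nonvanishing theorem ($K_{\mathrm{Cliff}(C),2}(C,K_C)\neq 0$) together with $\mathrm{Cliff}(C)\le \gon(C)-2=2d-4$ from Lemma \ref{lem: gon} forces $\mathrm{Cliff}(C)=2d-4$ \emph{a posteriori}, which is what identifies the vanishing with Green's conjecture. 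Reordering your argument this way removes its only non-rigorous point and leaves you with exactly the paper's proof.
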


\begin{corollary}
\label{cor: double}
 Let $C\to \Gamma\subset\PP^2$ be a general ramified
double covering of genus
$g\ge 17$ of a smooth plane quartic. Then $K_{3,2}(C,K_C)=0$.
\end{corollary}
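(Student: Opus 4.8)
Corollary \ref{cor: double} is the case $d=4$ of Theorem \ref{thm: double}: for $d=4$ one has $d^2+1=17$ and $2d-5=3$, so the hypothesis $g\ge 17$ and the conclusion $K_{3,2}(C,K_C)=0$ coincide with those of the theorem. I will therefore outline a proof of Theorem \ref{thm: double}, from which the corollary is immediate. By Green's duality the vanishing $K_{2d-5,2}(C,K_C)=0$ is equivalent to $K_{g-2d+3,1}(C,K_C)=0$; by Lemma \ref{lem: gon} the general cover has gonality $2d-2$, so this is precisely the sharp case of Green's conjecture for a curve of Clifford index $2d-4$. Since Koszul cohomology is upper semicontinuous in families, including families acquiring nodes, it suffices to produce, for each $g\ge d^2+1$, one curve in the closure of the family of such covers for which $K_{g-2d+3,1}$ vanishes.

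The plan is an induction on $g$ with base case $g=d^2+1$, which I realize on a $K3$ surface. Let $\pi\colon S\to \PP^2$ be the double cover branched along a general smooth sextic $B$; then $S$ is a $K3$ surface of degree $2$ with $\Pic(S)=\mathbb Z\cdot H$, where $H=\pi^*\OO_{\PP^2}(1)$ and $H^2=2$. For a general smooth plane curve $\Gamma$ of degree $d$ the preimage $C_0:=\pi^{-1}(\Gamma)\in|dH|$ is a smooth double cover of $\Gamma$ branched along $\Gamma\cap B$, with $g(C_0)=1+\tfrac12(dH)^2=d^2+1$. By Lemma \ref{lem: gon} one has $\gon(C_0)=2d-2$, whence $\Cliff(C_0)\le 2d-4$; the reverse inequality follows from \cite{GL3}, since with $\Pic(S)=\mathbb Z\cdot H$ any line bundle computing the Clifford index is of the form $\OO_{C_0}(aH)$, with $\Cliff(\OO_{C_0}(aH))=2a(d-a)-2$, minimal at $a=1$. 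Thus $\Cliff(C_0)=2d-4$. As $\gon(C_0)=2d-2\le[g/2]+1$, Theorem \ref{thm: AF} applies and shows that $C_0$ satisfies Green's conjecture; combined with $\Cliff(C_0)=2d-4$ this gives the sharp vanishing $K_{2d-5,2}(C_0,K_{C_0})=0$, i.e. $K_{d^2-2d+4,1}(C_0,K_{C_0})=0$.

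For the inductive step I pass from genus $g-1$ to genus $g$ by a single node. A general genus-$g$ cover has two more branch points than a general genus-$(g-1)$ cover; letting these two collide produces a one-nodal curve $\widehat{C}$ whose normalization is a \emph{general} genus-$(g-1)$ cover $C'$ and whose dualizing sheaf pulls back to $K_{C'}(x+y)$, where $x,y$ are the node preimages. By the inductive hypothesis $K_{(g-1)-2d+3,1}(C',K_{C'})=0$, so Corollary \ref{cor: projection can} yields $K_{g-2d+3,1}(C',K_{C'}(x+y))=K_{g-2d+3,1}(\widehat{C},\omega_{\widehat{C}})=0$. Since $\widehat{C}$ lies in the closure of the family of smooth genus-$g$ covers, semicontinuity transfers this vanishing to the general member, completing the induction and hence the proof of Theorem \ref{thm: double}; for $d=4$ this is exactly Corollary \ref{cor: double}.

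The step I expect to be the main obstacle is the exact determination of $\Cliff(C_0)$ in the base case: one must rule out every linear series of Clifford index below $2d-4$, so that Theorem \ref{thm: AF} delivers the \emph{sharp} vanishing $K_{2d-5,2}=0$ and not merely a weaker one. This is where the minimality $\Pic(S)=\mathbb Z\cdot H$ is indispensable, via \cite{GL3}. A second, more conceptual point to get right is the organization of the induction: one must add the nodes \emph{one at a time} over the general lower-genus cover, so that at each stage the relevant canonical bundle is special and Corollary \ref{cor: projection can} raises the Koszul index by exactly one. Collapsing all the extra branch points at once onto the fixed genus-$(d^2+1)$ curve would instead produce a curve carrying Green--Lazarsfeld extra syzygies, and the semicontinuity argument would break down.
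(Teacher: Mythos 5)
Your proposal is correct and follows essentially the same path as the paper: the identical K3 base case (the double plane, with the sharp vanishing supplied by Theorem \ref{thm: AF}) and the identical one-node-at-a-time induction via Corollary \ref{cor: projection can} plus semicontinuity, since your irreducible one-nodal curve $\widehat{C}$ is precisely the stable model of the paper's semistable curve $C\cup\PP^1$ (contracting the rational bridge identifies the two attachment points), so the two degenerations carry the same Koszul cohomology $K_{p,1}(C,K_C(x+y))$. The only substantive addition on your side is that you make explicit, via $\Pic(S)=\ZZ\cdot H$ and \cite{GL3}, why $\Cliff(C_0)=2d-4$ exactly in the base case --- a point the paper leaves implicit when it passes from Theorem \ref{thm: AF} to the vanishing $K_{g-2d+3,1}(C,K_C)=0$.
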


\begin{remark}
 The moduli space of double covers of
smooth plane curves of degree $d$ is irreducible,
and hence it makes sense to
speak about general double covers.
\end{remark}

\proof
From the semicontinuity
of Koszul cohomology and the irreducibility of the moduli space of double covers over smooth plane curves of degree $d$,
the conclusion follows by exhibiting one example of a
double cover $C$ of a plane curve of degree $d$, for which
$K_{2d-5,2}=0$. The proof goes by induction
on the genus $g$ of $C$, using degenerations.

\medskip

{\em The first step.}
Let $S\to \PP^2$ be a double cover ramified along
a sextic. The inverse image $C$ of a general plane curve $\Gamma$
of degree $d$ is
a $(2d-2)$-gonal smooth curve of genus $d^2+1$ (the number of
ramification points is $6d$). Applying theorem \ref{thm: AF}, it satisfies Green's
conjecture, and hence $K_{g-2d+3,1}(C,K_C)=0$.

\medskip

{\em The induction step.} Suppose that the conclusion is true
in genus $g$. We wish to prove it in genus $g+1$.
Consider $f:C\to \Gamma$ a {\em smooth} genus-$g$ double
cover of a plane curve of degree $d$, for which $K_{g-2d+3,1}(C,K_C)=0$.
Let $x\in\Gamma$ be a general point and
$\{x_0,x_1\}=f^{-1}(x)\subset C$ be the  fiber over $x$.
Attach a rational curve to $C$, gluing it over
two points $y_0,y_1\in \PP^1$ with $C$,
that is, consider
\[
 C^\prime:=C\cup \PP^1/
x_0\sim y_0,\ x_1\sim y_1.
\]
Observe that there is an admissible double cover $C^\prime\to \Gamma^\prime$,
where $\Gamma^\prime = \Gamma\cup \PP^1/x\sim y$,
where $y\in \PP^1$, see the figure \ref{fig:cover}.
\begin{figure}[htb!]
\centering%
\includegraphics[width=5.5cm, height=4.2cm]{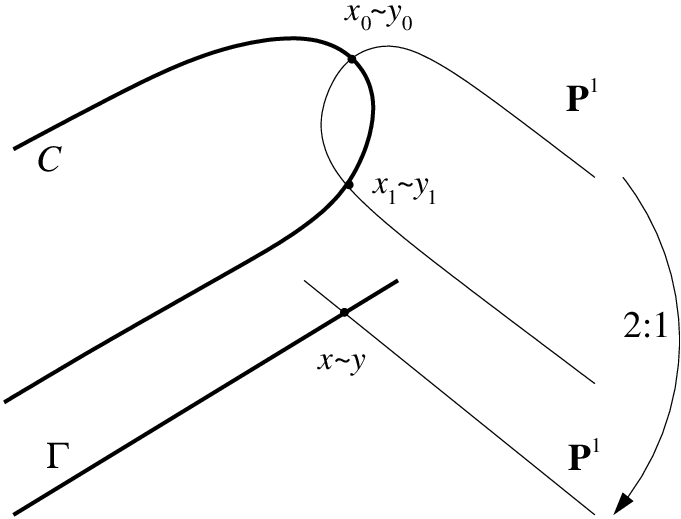}
\caption{The new admissible double cover.}
\label{fig:cover}
\end{figure}

It is clear that the genus of $C^\prime$ equals $g+1$ and
$p_a(\Gamma^\prime)=p_a(\Gamma)$. Arguing
as in \cite{Voisin-even}, the restriction map provides us
with an isomorphism
\[
 K_{p,1}(C^\prime,\omega_{C^\prime})\cong K_{p,1}(C,K_C(x_0+x_1)).
\]
From the induction hypothesis we know that
$K_{g-2d+3,1}(C,K_C)=0$. Applying Corollary \ref{cor: canonical}, it follows that
$K_{g-2d+4,1}(C,K_C(x_0+x_1))=0$, hence
$$K_{(g+1)-2d+3,1}(C^\prime,\omega_{C^\prime})=0,$$
the latter being the vanishing we wanted to obtain.
\endproof

\vskip 3pt
\noindent \emph{Proof of the second part of Theorem \ref{doubleplane}.} This time we start with a $K3$ surface
$S$ which is a cyclic fourfold cover of $\PP^2$ branched along a quartic. The inverse image of a general plane curve of degree $d$ is a curve $C$ with $g(C)=2d^2+1$ and $\mbox{gon}(C)=4d-4$. The induction step is similar to the one in Theorem \ref{thm: double} see figure \ref{fig:cover-4-1}.
\hfill $\Box$

\begin{figure}[htb!]
\centering%
\includegraphics[width=7.5cm, height=6cm]{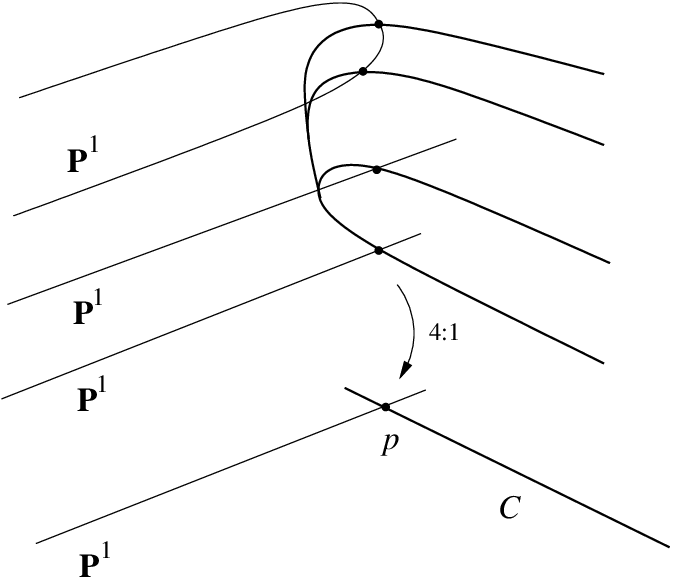}
\caption{The new admissible $4:1$ cover.}
\label{fig:cover-4-1}
\end{figure}

The curves on the double plane that we use in the
first step of the proof carry infinitely many minimal pencils,
and hence they do not verify the linear growth condition
(\ref{lgc}).
They are in fact special in their linear systems. According
to \cite{AF-Compositio}, a general curve in the corresponding
linear system does satisfy the linear growth condition.
This provides us with an example of a linear system on a $K3$ surface
where the dimensions of the Brill-Noether loci jump.


\section{Green's conjecture for general triple covers of elliptic curves}

Applying the Castelnuovo-Severi inequality as in Lemma \ref{lem: gon}, we obtain that if $C\rightarrow E$ is a triple cover of an elliptic curve $E$, then $C$ is $6$-gonal as soon as $g(C)\geq 12$.

\begin{theorem}
\label{thm: triple 2}
Let $C\to E$ be a general triple cover of an elliptic curve, where $g(C)\geq 13$. Then $K_{3,2}(C,K_C)=0$
and $C$ verifies Green's Conjecture.
\end{theorem}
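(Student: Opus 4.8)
Looking at this problem, I need to prove Green's conjecture for general triple covers of elliptic curves with $g(C) \geq 13$, showing $K_{3,2}(C,K_C)=0$.

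Let me think about the structure here. The target is $K_{3,2}(C,K_C)=0$, which by duality equals $K_{g-c-1,1}(C,K_C)=0$ where we'd need $c=\mathrm{Cliff}(C)$. Since $C$ is 6-gonal (from Castelnuovo-Severi when $g\geq 12$), and if the Clifford index is computed by the gonality pencils, we'd have $c=4$, so Green's conjecture requires $K_{p,2}=0$ for $p<4$, i.e., $K_{3,2}=0$. This matches the strategy used for Theorem \ref{thm: double}: prove the equivalent vanishing $K_{g-c-1,1}(C,K_C)=K_{g-5,1}(C,K_C)=0$.

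The plan follows the degeneration/induction template from Theorem \ref{thm: double} exactly.

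\medskip

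The plan is to mimic the proof of Theorem \ref{thm: double} verbatim, replacing the double plane by a suitable $K3$ surface carrying triple covers of elliptic curves and replacing the admissible double covers in the induction step by admissible triple covers. By the semicontinuity of Koszul cohomology and the irreducibility of the relevant moduli space of triple covers, it suffices to exhibit a single triple cover $C\to E$ of each genus $g\geq 13$ with $K_{g-5,1}(C,K_C)=0$; the argument will proceed by induction on $g$.

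\medskip

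For the base case I would produce an elliptic pencil structure on a $K3$ surface $S$ so that the members cut out triple covers of a fixed elliptic curve. Concretely, I would seek a $K3$ surface $S$ admitting an elliptic fibration $\psi:S\to\PP^1$ together with a second map realizing a general curve $C$ in an appropriate linear system $|L|$ as a triple cover $C\to E$ of an elliptic curve $E$ of small genus $g_0=13$; such $C$ is $6$-gonal by the Castelnuovo-Severi bound, and Theorem \ref{thm: AF} applies to any smooth curve on a $K3$ surface of gonality $d\le[\frac{g}{2}]+1$, giving $K_{g_0-5,1}(C,K_C)=0$ directly.

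\medskip

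For the induction step, I would take a smooth genus-$g$ triple cover $f:C\to E$ with $K_{g-5,1}(C,K_C)=0$, pick a general point $x\in E$ with fiber $f^{-1}(x)=\{x_0,x_1,x_2\}$, and attach a rational curve glued to $C$ at these three points, forming a stable curve $C'$ of genus $g+1$ carrying an admissible triple cover $C'\to E'$ where $E'=E\cup\PP^1$. Arguing as in \cite{Voisin-even}, the restriction map gives an isomorphism
$$K_{p,1}(C',\omega_{C'})\cong K_{p,1}(C,K_C(x_0+x_1+x_2)).$$
Applying Corollary \ref{cor: canonical} with the effective divisor $E=x_0+x_1+x_2$ of degree $e=3$ would then yield $K_{g-5+2,1}(C,K_C(x_0+x_1+x_2))=0$, hence $K_{(g+1)-5,1}(C',\omega_{C'})=0$, closing the induction.

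\medskip

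I expect the main obstacle to lie in the geometric input rather than the homological bookkeeping: one must verify that the chosen $K3$ surface genuinely produces \emph{general} triple covers of elliptic curves (so that semicontinuity transfers the vanishing to the general point of the Hurwitz-type moduli space), and one must confirm that attaching a $\PP^1$ along a full fiber of the degree-3 map yields a bona fide admissible triple cover $C'\to E'$ in the closure of the family. The genus bound $g\geq 13$ is dictated by the base case needing $6$-gonality via Castelnuovo-Severi, so the delicate point is ensuring the base-case curve on the $K3$ surface has Clifford index exactly computed by a $\mathfrak g^1_6$, so that Green's conjecture for it reads precisely $K_{g-5,1}=0$.
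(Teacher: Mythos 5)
Your overall strategy---induction on the genus, a $K3$ base case handled by Theorem \ref{thm: AF}, semicontinuity plus irreducibility of the space of triple covers, and projection of syzygies in the inductive step---is the same as the paper's. But your induction step contains a genuine gap: attaching a \emph{single} rational curve to $C$ at all three points $x_0,x_1,x_2$ of a fiber produces a nodal curve of arithmetic genus $g+2$, not $g+1$ (three nodes, two components: $p_a = g+0+3-2+1 = g+2$). Your own indices betray this: Corollary \ref{cor: canonical} with $e=3$ yields $K_{g-3,1}\bigl(C,K_C(x_0+x_1+x_2)\bigr)=0$, which is the vanishing relevant for Green's conjecture in genus $g+2$, and not the vanishing $K_{(g+1)-5,1}=K_{g-4,1}$ that you claim to obtain. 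Consequently your induction advances in steps of $2$, and starting from the base case in genus $13$ it reaches only odd genera; the theorem for even $g\geq 14$ is left unproved. (This two-step variant is exactly what the paper's remark following the theorem exploits, for \emph{cyclic} triple covers of odd genus, where the three-point attachment does smooth inside the family of cyclic covers.)

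The paper sidesteps the parity problem with a different attachment: a rational curve $R$ glued to $C$ at only \emph{two} points $x_0,x_1$ of the fiber, together with a separate rational tail $R'$ glued at the third point $x_2$. This yields an admissible triple cover of $E'=E\cup_t \PP^1$ with $\deg(f'|_{R})=2$ and $\deg(f'|_{R'})=1$, arithmetic genus exactly $g+1$, and the Koszul isomorphism $K_{p,1}(C',\omega_{C'})\cong K_{p,1}\bigl(C,K_C(x_0+x_1)\bigr)$ (the tail, being attached at a single point, contributes nothing). Corollary \ref{cor: canonical} with $e=2$ then gives precisely $K_{(g+1)-5,1}(C',\omega_{C'})=0$, so the induction advances by $1$ and covers every genus $\geq 13$. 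Your base case is also vaguer than the paper's (which takes a concrete cyclic triple cover $S\to \PP^1\times\PP^1$ branched along a smooth $(3,3)$ curve and lets $C$ be the preimage of a general $(2,2)$ elliptic curve, giving a $6$-gonal curve of genus $13$ on a $K3$ surface), but it is in the right spirit; the irreparable defect as written is the genus count in the inductive step.
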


\proof
The proof goes by induction on the genus and is very similar to that
of Theorem \ref{thm: double}. Note that the moduli space of
triple covers of elliptic curves is irreducible by e.g. \cite{GHS},
hence it suffices to find an example in each genus.

\medskip

{\em The first step.}
Let $S\to \PP^1\times \PP^1:=Q$ be a cyclic triple
cover ramified along a smooth genus $4$ curve, which has type $(3, 3)$ on $Q$. It is immediate that
$S$ is a $K3$ surface.
The inverse image $C$ of a general curve $E$
of type $(2,2)$ is
a smooth $6$-gonal curve of genus $13$, and
the induced triple cover $C\to \Gamma$ is ramified
over $24$ points (the ramification points of a cyclic cover
are totally ramified, thus the degree of the ramification divisor is $48$).
Since $S$ is a $K3$ surface, we apply \cite{AF-Compositio},
to conclude that $K_{3,2}(C,K_C)=0$.

\medskip

{\em The induction step.} We suppose that the conclusion is true
in genus $g$ and we prove it in genus $g+1$.
Consider a triple covering $f:C\to E$, where both $C$ and $E$ are smooth curves, $g(C)=g\geq 13$ and $g(E)=1$. Assume that
$K_{g-5,1}(C,K_C)=0$.
Let $t\in E$ be a non-ramified point and
$\{x_0,x_1, x_2\}=f^{-1}(t)\subset C$ be the  fiber over $t$.
Attach a rational curve $R$ to $C$, gluing it along $x_0$ and $x_1$, as well as a further rational tail $R'$ meeting $C$ in $x_2$,
that is, consider the (non)-stable curve
\[
 C^\prime:=C\cup R\cup R', \ \ C\cap R=\{x_0, x_1\}, \ \ C\cap R'=\{x_2\}.
\]
There exists an admissible triple cover $f':C^\prime\to E^\prime$,
where $E^\prime = \Gamma\cup_t \PP^1$, where $f'(R)=f'(R')=\PP^1$, $\mbox{deg}(f'_R)=2$ and $\mbox{deg}(f'_{R'})=1$.

The genus of $C^\prime$ equals $g+1$ and  there is an isomorphism
\[
 K_{p,1}(C^\prime,\omega_{C^\prime})\cong K_{p,1}(C,K_C(x_0+x_1)).
\]
From the induction hypothesis we know that
$K_{g-5,1}(C,K_C)=0$. Applying projection of
syzygies, it follows that
$K_{g-4,1}(C,K_C(x_0+x_1))=0$, hence
$K_{(g+1)-5,1}(C^\prime,\omega_{C^\prime})=0$,
the latter being the vanishing we were looking for.
\endproof

\begin{remark}
A slight modification in the proof shows that Green's Conjecture also holds for general \emph{cyclic triple covers} of elliptic curves with source being a curve of
odd genus $g\geq 13$. The modification of the proof appears in the inductive argument. Starting with $f:C\rightarrow E$ as above, we can attach
a smooth rational curve meeting $C$ at $x_0, x_1$ and $x_2$. The resulting curve has genus $g+2$ and smooths to a cyclic cover over an elliptic curve.
\end{remark}

\section{Syzygies of double covers of curves of Clifford dimension $3$}
We present an inductive proof of Theorem \ref{cliffdim3} and consider a curve $[X]\in \cM_{10}$ with $r(X)=3$, thus $W^3_9(C)\neq \emptyset$ and $\mbox{dim } W^1_6(X)=1$.
If $f:C\rightarrow X$ is a genus $g$ double cover, the Castelnuovo-Severi inequality implies that $\mbox{gon}(C)=12$ as soon as $g\geq 30$.
The critical point in the proof is the starting case, the inductive step is identical to that in the proof of Theorem
\ref{doubleplane}.
\vskip 3pt

\noindent \emph{Proof of Theorem \ref{cliffdim3}.} We choose a smooth cubic surface $Y=\mbox{Bl}_6(\PP^2)$ and denote by
$h\in \mbox{Pic}(S)$ the class of the pull-back of a line in $\PP^2$
and by $E_1, \ldots, E_6$ the exceptional divisors on $Y$. We choose a general genus $4$ curve
$$B\in |-2K_Y|=|\OO_Y(6h-2E_1-\cdots-2E_6)|$$
and let $f:S\rightarrow Y$ be the double cover branched along $B$. Then
$S$ is a smooth $K3$ surface and let $\iota\in \mbox{Aut}(S)$ be the covering
involution of $f$.
Clearly $H^2(S, \mathbb Z)^{\iota}$ can be identified with the pull-back of the Picard lattice of
$Y$, and when $B\in |-2K_Y|$ is general, reasoning along the lines of \cite{AK} Theorem 2.7
we observe that
$$\mbox{Pic}(S)=H^2(S, \mathbb Z)^{\iota}=f^*\mbox{Pic}(Y)=\mathbb Z\langle f^*(h), \OO_S(R_1), \ldots, \OO_S(R_6)\rangle,$$ where $R_i:=f^*(E_i)$ are $(-2)$-curves.
We further choose a general curve $X\in |-3K_Y|$, thus $g(X)=10$ and $r(X)=3$.
Let $C:=f^{-1}(X)\subset S$, hence $g(C)=28$. As a section of the $K3$ surface $S$, the curve $C$ satisfies Green's Conjecture and Theorem \ref{cliffdim3} follows once we show that
$\mbox{gon}(C)=12$. Assume by contradiction that $\mbox{gon}(C)<12$. Applying once more \cite{GL3}, there exists a divisor class
$$D\equiv af^*(h)-b_1R_1-\cdots-b_6R_6\in \mathrm{Pic}(S),$$
with $a, b_1, \ldots, b_6\in \mathbb Z$, such that $0\leq C\cdot D\leq g-1=27$, $h^i(S, \OO_S(D))\geq 2$ for $i=0, 1$ and
$$\mbox{gon}(C)=\mbox{Cliff}(\OO_C(D))+2=C\cdot D-D^2=$$
$$=\phi(D):=18a-2a^2-6(b_1+\cdots+b_6)+2(b_1^2+\cdots+b_6^2)<12.$$
From the Castelnuovo-Severi inequality, we find that $\phi(D)\geq 9$, hence based on parity $\phi(D)=10$.
Note that $C\cdot D\ge 10$ and is a multiple of $6$, hence
$C\cdot D\in \{12, 18, 24\}$.
We study each of these cases separately. If $C\cdot D=18$ and $D^2=8$, then
$$b_1+\cdots+b_6=3a-3 \mbox{ and } b_1^2+\cdots+b_6^2=a^2-4.$$

By the Cauchy-Schwarz inequality $6(\sum_{i=1}^6 b_i^2)\geq (\sum_{i=1}^6 b_i)^2$, and
hence $a^2-6a+11\leq 0$, which is a contradiction. If $C\cdot D=24$ and $D^2=14$, then
$$b_1+\cdots+b_6=3a-4 \mbox{ and } b_1^2+\cdots+b_6^2=a^2-7,$$
which leads to the contradiction $3a^2-24a+58\leq 0$. Finally if $C\cdot D=12$ and $D^2=2$, then
$$b_1+\cdots+b_6=3a-2 \mbox{ and }\ b_1^2+\cdots+b_6^2=a^2-1.$$
Again the Cauchy-Schwarz inequality implies that the only possible case is when $a=2$ and then $\sum_{i=1}^6 b_i=4$ and $\sum_{i=1}^6 b_i^2=3$. It is obvious (compare the parities)
that these diophantine equations have no common solution. We conclude that $\mbox{gon}(C)=12$.

$\hfill$ $\Box$


\end{document}